\numberwithin{figure}{section}
\newtheorem{theorem}{Theorem}[section]
\newtheorem{lemma}{Lemma}[section]
\newtheorem{corollary}{Corollary}[section]
\newtheorem{example}{Example}[section]
\newtheorem{assumption}{Assumption}[section]
\newcommand{\vare}{\varepsilon}
\newcommand{\lf}{\lfloor}
\newcommand{\rf}{\rfloor}
\newcommand\bae{\bar{e}}
\newcommand\bbe{\mbox{\boldmath${ \beta}$}}
\newcommand\btheta{\mbox{\boldmath${\theta}$}}
\newcommand\bTheta{{\mbox{\boldmath${ \Theta}$}}}
\newcommand\cS{\mbox{$\mathcal{S}$}}
\newcommand\bX{{\bf X}}
\newcommand\bG{{\bf G}}
\newcommand\T{\top}
\newcommand\bZ{{\bf Z}}
\newcommand\bA{{\bf A}}
\newcommand\bx{{\bf x}}
\def\beq{\begin{equation}}
\def\eeq{\end{equation}}
\def\bals{\begin{align*}}
\def\eals{\end{align*}}
\def\bal{\begin{align}}
\def\eal{\end{align}}
\numberwithin{equation}{section}
\numberwithin{theorem}{section}
\numberwithin{table}{section}
\DeclareMathOperator{\ar}{AR}
\DeclareMathOperator{\garch}{GARCH}
\def\convP{\stackrel{\mbox{$\scriptstyle P$}}{\rightarrow}}
\begin{document} %{\tt \large \today} drizzt@mu.edu.pl

\title[Change point detection in heteroscedastic time series]
{Change point detection in heteroscedastic time series}
\author {Tomasz G\'orecki}
\address{Tomasz G\'orecki, Faculty of Mathematics and Computer Science,
Adam Mickiewicz University, 61-614 Pozna{\'n}, Poland}
\email{tomasz.gorecki@amu.edu.pl}
\author {Lajos Horv\'ath}
\address{Lajos Horv\'ath, Department of Mathematics, University of Utah,
Salt Lake City, UT 84112--0090, USA}
\email{horvath@math.utah.edu}

\author{Piotr Kokoszka}
\address{Piotr Kokoszka, Department of Statistics, Colorado State University,
Fort Collins, CO 80523--1877, USA}
\email{Piotr.Kokoszka@colostate.edu}

\thanks{This project was  supported by NSF grants DMS 1305858
at the University of Utah and DMS 1462067 at Colorado State University.}

\begin{abstract} Many  time series exhibit changes both in
 level and in variability. Generally, it is more important to detect a change
in the level, and changing or smoothly evolving variability can confound
existing tests. This paper develops a framework for testing for shifts
in the level of a series which accommodates the possibility of changing
variability. The resulting tests are robust both to heteroskedasticity and
serial dependence. They rely on a new functional central limit
theorem for  dependent random variables whose variance can change
or trend in a  substantial way. This new result is of independent interest
as it can be applied in many  inferential contexts applicable to
time series. Its application  to change point tests relies on
a new approach which utilizes Karhunen--Lo{\'e}ve expansions of
the limit Gaussian processes. After presenting the theory in
the most commonly encountered setting of the detection of
a change point in the mean, we show how it can be extended
to linear and nonlinear regression. Finite sample performance is
examined by means of a simulation study and an application to
yields on US treasury bonds.

\medskip
\noindent {\em Keywords:} Change point, Functional central limit theorem,
Heteroskedastic time series, Karhunen--Lo{\'e}ve expansion.

%JEL Classification:
%C12, 	%Hypothesis Testing: General
%C52. 	%Model Evaluation, Validation, and Selection
%\tableofcontents

\end{abstract}

\maketitle

\section{Introduction }\label{sec-main}
In the most common change point paradigm, we consider the model
\[
X_i=\mu_i+u_i,\quad 1\leq i \leq N,
\]
with mean zero errors, $Eu_i=0$, and
wish to test the no change in the mean  null hypothesis
\[
H_0:\;\;\;\mu_1=\mu_2=\ldots =\mu_N.
\]
The general alternative is that $H_0$ does not hold, but we
target several  change point alternatives discussed in Examples
\ref{ex-1}--\ref{ex-3}.  Cs\"org\H{o} and
Horv\'ath (1997) provide an account of early results in change point
detection based mainly on independent and identically distributed
error terms and connect the likelihood method to maximally selected
CUSUM. Aue and Horv\'ath (2013) explain the extension of some of the
classical results to time series setting. Jeng (2015) provides an
overview of change point detection in finance. In change point
research, usually the homoscedasticity of the errors is assumed.
Incl\'an and Tiao (1994), Gombay and Horv\'ath (1994), Davis et al.\
(1995), Lee and Park (2001), Deng and Perron (2008), Antoch et al.\
(1997), Berkes et al.\ (2009), Aue et al.\ (2009), Wied et al.\ (2012,
2013) propose tests when the mean and/or the variance are changing
under the alternative, i.e.\ heteroscedastic errors can occur only
under the alternative.  Dalla et al.\ (2015) and Xu (2015)
point out that in some
applications the errors are heteroscedastic, which should be taken
into account when we test the validity of $H_0$.  Our paper is
related to  their work. It  rigorously derives a new class of
tests which are valid under weak assumptions, which do not require
any mixing conditions. Busetti and Taylor
(2004), Cavaliere et al. (2011), Cavaliere and Taylor (2008), Hansen
(1992) and Harvey et al. (2006) investigate change
point tests when some type of nonstationarity is exhibited by the data.

 The  paper is organized as follows.
 Section~\ref{s:alr} develops the  asymptotic framework. The limit
 distributions of the tests statistics are nonstandard if error
 heteroskedastisity is allowed. These distribution can however be
 computed using suitable Karhunen--Lo{\'e}ve expansions, which also
 lead to practical ways of computing the critical or P--values, as
 explained in Section~\ref{s:comp}.  Section~\ref{sec-sim} explores
 the finite sample performance of the tests. Proofs of the asymptotic
 results are collected in Section~\ref{sec-basicpr}.

\section{Assumptions and limit results} \label{s:alr}
We consider heteroscedastic errors:
 \begin{assumption}\label{as-1}
$u_i=u_{i,N}=a(i/N)e_i,\;\; 1\leq i \leq N$,
 \end{assumption}
where the function $a$ satisfies
\begin{assumption}\label{as-2} $a(t), 0\leq t \leq 1$,
 has bounded variation on $[0,1]$.
 \end{assumption}
 (For the definition and properties of functions with bounded variation we
 refer e.g.\ to Hewitt and Stromberg (1969).)
 \medskip

We allow a very general class of errors $e_i, -\infty<i<\infty$:

 \begin{assumption}\label{as-3} $Ee_i=0$ and $e_i, 1\leq i <\infty,$ satisfy the functional central limit theorem, i.e.\ there is $\sigma>0$ such that
 $$
 N^{-1/2}\sum_{\ell=1}^{\lf Nt\rf}e_\ell \;\;\stackrel{{\mathcal D}[0,1]}{\longrightarrow}\;\;\sigma W(t),
 $$
 where $W(t), 0\leq t <\infty$, denotes a Wiener process (standard Brownian motion).
 \end{assumption}
We do not assume stationarity or any
form of mixing for the error terms, they must merely satisfy
a Central Limit Theorem, which is a minimal requirement for
the existence of an asymptotic distribution of common test statistics.

The theory of testing in the various contexts studied below is
based on the following result.

 \begin{theorem}\label{basic} If Assumptions \ref{as-1}--\ref{as-3} are
satisfied, then
\[
 N^{-1/2}\sum_{\ell=1}^{\lf Nt\rf}u_\ell\;\stackrel{{\mathcal D}[0,1]}{\longrightarrow}\;W(b(t)),
\]
 where $W(u), 0\leq u <\infty$, is a Wiener process
 (standard Brownian motion) and
\[
 b(t)=\sigma^2\int_0^t a^2(u)du.
\]
 \end{theorem}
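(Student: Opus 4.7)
The plan is to transfer the deterministic weight $a(\ell/N)$ off the $e_\ell$ via Abel summation, reducing the statement to Assumption~\ref{as-3} through a Riemann--Stieltjes representation. I would begin by invoking the Skorokhod representation theorem: because the limit $\sigma W$ in Assumption~\ref{as-3} has continuous paths, the convergence in $\mathcal{D}[0,1]$ can be realized on a probability space on which $U_N(t) := N^{-1/2}\sum_{\ell=1}^{\lfloor Nt\rfloor} e_\ell$ tends to $\sigma W(t)$ almost surely, uniformly in $t\in[0,1]$. With $m=\lfloor Nt\rfloor$, summation by parts yields
\[
N^{-1/2}\sum_{\ell=1}^{m} a(\ell/N)\, e_\ell = a(m/N)\, U_N(m/N) - \sum_{\ell=1}^{m-1}\bigl[a((\ell+1)/N) - a(\ell/N)\bigr]\, U_N(\ell/N),
\]
whose second summand is the Riemann--Stieltjes sum for $\int_0^{m/N} U_N(s)\, da(s)$.

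Next, I would establish uniform-in-$t$ convergence of both terms. The boundary term $a(\lfloor Nt\rfloor/N)\, U_N(\lfloor Nt\rfloor/N)$ tends to $\sigma a(t) W(t)$ because $U_N\to\sigma W$ uniformly and $a$ is bounded (any BV function is bounded). For the sum, split
\[
\sum_{\ell=1}^{m-1}\bigl[a((\ell+1)/N) - a(\ell/N)\bigr]\, U_N(\ell/N) = A_N(t) + B_N(t),
\]
where $A_N$ uses $U_N(\ell/N)-\sigma W(\ell/N)$ and $B_N$ uses $\sigma W(\ell/N)$. Then $|A_N(t)|\leq V(a)\cdot\sup_s|U_N(s)-\sigma W(s)|\to 0$ almost surely, uniformly in $t$, by Assumption~\ref{as-2}. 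The term $B_N$ is a Riemann--Stieltjes sum of the continuous integrand $\sigma W$ against the BV integrator $a$, and therefore converges uniformly in $t\in[0,1]$ to $\sigma\int_0^t W(s)\, da(s)$. Combining these facts, the partial-sum process converges almost surely in $\mathcal{D}[0,1]$ to
\[
G(t) := \sigma\, a(t)\, W(t) - \sigma\int_0^t W(s)\, da(s).
\]

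Finally, I would identify $G$ with $W\circ b$ in distribution. Stieltjes integration by parts rewrites $G(t) = \sigma\int_0^t a(s)\, dW(s)$ (Ito integral), a continuous mean-zero Gaussian martingale with quadratic variation $\sigma^2\int_0^t a^2(s)\, ds = b(t)$; by the Dambis--Dubins--Schwarz theorem (or, directly, both processes are continuous, Gaussian, mean zero, and share the covariance $\sigma^2\int_0^{s\wedge t} a^2(u)\, du$), $G$ is equal in law to $W\circ b$. The main technical obstacle I anticipate is the uniform-in-$t$ handling when $a$ is discontinuous: the boundary term $a(m/N)\, U_N(m/N)$ is discontinuous in $t$ at jump points of $a$, as is the Riemann--Stieltjes sum $B_N(t)$, and one must verify that their prelimit discontinuities cancel exactly to produce the continuous limit $G$. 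Establishing the uniform Riemann--Stieltjes approximation for the continuous integrand $\sigma W$ against the BV integrator $a$, including this jump cancellation, is the delicate core of the argument.
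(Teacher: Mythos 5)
Your proposal is correct and follows essentially the same route as the paper's proof: Abel summation to shift the weights $a(\ell/N)$ onto the partial sums of the $e_\ell$, a Skorokhod-type almost sure realization of Assumption~\ref{as-3}, bounded variation of $a$ to control the substitution error, Stieltjes integration by parts to arrive at $\sigma\int_0^t a\,dW$, and identification with $W(b(t))$ via the covariance. The ``delicate core'' you flag (jump cancellation between the boundary term and the Riemann--Stieltjes sum when $a$ is discontinuous) is exactly what the paper resolves by applying the Jordan decomposition $a=a_1-a_2$ and using the exact identity $W_N(k)a_i(k/N)-\int_0^k W_N\,da_i(\cdot/N)=\int_0^k a_i(\cdot/N)\,dW_N$ together with the modulus of continuity of the Wiener process.
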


 Theorem \ref{basic} is a major theoretical contribution of this
 paper.  It establishes the asymptotic behavior of the partial sum
 process for dependent random variables with evolving variance without
 imposing any stationarity or mixing conditions on the errors $e_i$.
 The time transformed Wiener process has been known to appear as a
 limit since the 1950's. Limit theorems similar to our
 Theorem~\ref{basic} under mixing assumptions are discussed in Hall and
 Heyde (1980) and Davidson (1994). We show that mixing conditions are
 actually not needed.  Theorem \ref{basic} can be used in settings
 that extend beyond change--point detection, for example in various
 unit root and trend tests.

In Section~\ref{ss:cm},  we show how Theorem~\ref{basic} leads to
a class of change point tests in the setting of a potential
change in mean. Section~\ref{ss:cr} extends the scope of applicability
to regression models.

\subsection{Change point in the mean} \label{ss:cm} ~\\
We begin by presenting several examples of changes our tests can
detect.

\begin{example}\label{ex-1} We say that there is exactly one change in the mean if $\mu_i=\tilde{\mu}_1, 1\leq i\leq \lf N\theta\rf$, and $\mu_{i}=\tilde{\mu}_2, \lf N\theta\rf<i\leq N$, where $\tilde{\mu}_1\neq \tilde{\mu}_2$ and $0<\theta<1$. In this case $\lf N\theta\rf$ is the time of change and
 \begin{displaymath}
 d(t)=\left\{
 \begin{array}{ll}
 t\tilde{\mu}_1-t(\theta\tilde{\mu}_1+(1-\theta)\tilde{\mu}_2),\;\;&\mbox{if}\;\;0\leq t \leq \theta,
 \vspace{.3cm}\\
 \theta\tilde{\mu}_1+(t-\theta)\tilde{\mu}_2-t(\theta\tilde{\mu}_1+(1-\theta)\tilde{\mu}_2),\;\;&\mbox{if}\;\;\theta\leq t \leq 1.
 \end{array}
 \right.
 \end{displaymath}
 \end{example}

 \begin{example}\label{ex-2} If $\mu_i=\tilde{\mu}_\ell, \lf N\theta_{\ell-1}\rf<i\leq \lf N\theta_{\ell}\rf$, $1\leq \ell\leq m+1$, $\theta_0=0, \theta_{m+1}=N$ and
 $\mu_\ell\neq \mu_{\ell'}$ for some $\ell\neq \ell'$, we have at most $m$ changes in the mean. With $\tilde{\mu}=(\theta_1-\theta_0)\tilde{\mu}_1+(\theta_2-\theta_1)\tilde{\mu}_2+\ldots +(\theta_{m+1}-\theta_m)\tilde{\mu}_{m+1}$, we get

 \begin{displaymath}
 d(t)=\left\{
 \begin{array}{ll}
 t\tilde{\mu}_1-t\tilde{\mu},\;\;\mbox{if}\;\;0\leq t \leq \theta_1,
 \vspace{.3cm}\\
 \theta_1\tilde{\mu}_1+(t-\theta_1)\tilde{\mu}_2-t\tilde{\mu},\;\;\mbox{if}\;\;\theta_1\leq t \leq \theta_2,
 \\
 \hspace{1cm}\vdots
 \\
 \theta_1\tilde{\mu}_1+(\theta_2-\theta_1)\tilde{\mu}_2+\ldots +(t-\theta_m)\tilde{\mu}_{m+1}-t\tilde{\mu},\;\;\mbox{if}\;\;\theta_m\leq t \leq \theta_{m+1}.
 \end{array}
 \right.
 \end{displaymath}
 \end{example}

 \begin{example}\label{ex-3}  Let $\tilde{d}(t)$ be a continuous function   on $[0,1]$ and define $\mu_i=\tilde{\mu}_1$, if $1\leq i\leq \lf N\theta\rf$ and $\mu_i=\tilde{d}(i/N)$, if $\lf N\theta\rf <i\leq N$. If $\tilde{d}(t)$ is different from $\tilde{\mu}_1$, the mean is constant before $\lf N\theta \rf$ but it is determined by $\tilde{d}(t)$
 after the time of change. Now
 \begin{displaymath}
 d(t)=\left\{
 \begin{array}{ll}
\displaystyle  t\tilde{\mu}_1-t\left(\theta\tilde{\mu}_1+\int_\theta^1\tilde{d}(u)du\right),\;\;&\mbox{if}\;\;0\leq t \leq \theta,
 \vspace{.3cm}\\
\displaystyle  \theta\tilde{\mu}_1+\int_\theta^t\tilde{d}(u)du-t\left(\theta\tilde{\mu}_1+\int_\theta^1\tilde{d}(u)du\right),\;\;&\mbox{if}\;\;\theta\leq t \leq 1.
 \end{array}
 \right.
 \end{displaymath}
 This example includes linearly or polynomially increasing/decreasing means after the change.
 \end{example}

Recall the definition of the CUSUM process:
\[
Z_N(t)=N^{-1/2}\left(\sum_{\ell=1}^{\lf Nt\rf}X_\ell-\frac{\lf Nt\rf}{N}\sum_{\ell=1}^N X_\ell\right).
\]
In the setting of iid normal errors, the maximally
selected CUSUM statistic can be derived from the maximum likelihood
principle. Test  based on other functionals of the CUSUM process
are often the simplest and most effective in more general settings,
and are, in fact,  the most often used change point detection procedures.
Our testing procedures are based on functionals of the CUSUM process
as well. However, in the setting specified by Assumptions
\ref{as-1}--\ref{as-3}, especially~\ref{as-1}, the asymptotic behavior
of this process is very different than in the usual case of homoskedastic
errors. Understanding this behavior is necessary to derive the tests.
The weak convergence of the CUSUM process is an immediate consequence
of Theorem \ref{basic}.

 \begin{corollary}\label{basic-cor} If $H_0$ and Assumptions \ref{as-1}--\ref{as-3} are satisfied, then we have that
\[
 Z_N(t)\;\stackrel{{\mathcal D}[0,1]}{\longrightarrow}\;\Gamma(t),\;\;\mbox{where}\;\;\Gamma(t)=W(b(t))-tW(b(1)).
\]
 \end{corollary}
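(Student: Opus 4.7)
The plan is to reduce the statement to a direct application of Theorem~\ref{basic} combined with the continuous mapping theorem.

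First, I would observe that under $H_0$ we have $\mu_1=\ldots=\mu_N=\mu$, so $X_\ell=\mu+u_\ell$ and the constant $\mu$ cancels in the CUSUM:
\[
Z_N(t)=N^{-1/2}\sum_{\ell=1}^{\lf Nt\rf}u_\ell-\frac{\lf Nt\rf}{N}\,N^{-1/2}\sum_{\ell=1}^{N}u_\ell.
\]
Introducing the partial sum process $S_N(t)=N^{-1/2}\sum_{\ell=1}^{\lf Nt\rf}u_\ell$, this reads $Z_N(t)=S_N(t)-(\lf Nt\rf/N)\,S_N(1)$.

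Next, I would invoke Theorem~\ref{basic}, which gives $S_N\stackrel{{\mathcal D}[0,1]}{\longrightarrow}W(b(\cdot))$. Since $b(t)=\sigma^{2}\!\int_0^t a^2(u)\,du$ is continuous, the limit process $W(b(\cdot))$ has continuous sample paths almost surely, so Skorokhod convergence is equivalent to uniform convergence after a suitable coupling. I would then define the continuous functional $\phi:{\mathcal D}[0,1]\to {\mathcal D}[0,1]$ by $\phi(x)(t)=x(t)-t\,x(1)$; this map is Lipschitz in the supremum norm with constant~$2$, hence continuous on the subspace $C[0,1]$ that supports the limit. The continuous mapping theorem then yields
\[
S_N(t)-t\,S_N(1)\;\stackrel{{\mathcal D}[0,1]}{\longrightarrow}\;W(b(t))-t\,W(b(1))=\Gamma(t).
\]

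Finally, I would account for the discretization by replacing $t$ with $\lf Nt\rf/N$. Writing
\[
Z_N(t)-(S_N(t)-t\,S_N(1))=\Bigl(t-\frac{\lf Nt\rf}{N}\Bigr)S_N(1),
\]
the factor in parentheses is bounded in absolute value by $1/N$ uniformly in $t\in[0,1]$, while $S_N(1)$ is stochastically bounded by Theorem~\ref{basic}. Consequently, $\sup_{0\leq t\leq 1}|Z_N(t)-(S_N(t)-tS_N(1))|=O_P(N^{-1})\to 0$, and the desired conclusion follows by Slutsky. The only nonroutine step is checking the continuity of $\phi$ on $C[0,1]$ together with the negligibility of the $\lf Nt\rf/N-t$ discrepancy, both of which are standard.
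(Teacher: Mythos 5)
Your proposal is correct and matches the paper's approach: the paper simply states that the corollary ``follows immediately from Theorem \ref{basic},'' and your argument spells out exactly the routine details behind that claim (cancellation of the common mean under $H_0$, the continuous mapping theorem for $x\mapsto x(\cdot)-\cdot\,x(1)$ at the continuous limit, and the $O_P(N^{-1})$ discrepancy from replacing $\lf Nt\rf/N$ by $t$).
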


 \medskip It is easy to see that $\Gamma(t), 0\leq t \leq 1,$ is a
 Gaussian process with $E\Gamma(t)=0$ and
\begin{equation} \label{e:Cts}
 C(t,s)=E\Gamma(t)\Gamma(s)=b(t\wedge s)-tb(s)-sb(t)+tsb(1).
\end{equation}
 where $t\wedge s=\min(t,s)$.

 In general, the computation of the distribution functions of
 functionals of the limit in Corollary \ref{basic-cor} is nearly
 impossible due to the dependence on the unknown function $b(t)$.
 However, combining the Karhunen--Lo\'eve expansion with principal
 component analysis we can approximate the distributions of $L^2$
 functionals. This is done in Section~\ref{s:comp}.

 The derivation of the $L^2$ functional of the {\em standardized }
 $Z_N(t)$ requires a bound on the correlation between the elements of
 the sequence $\{e_i, -\infty<i<\infty\}$:

 \begin{assumption}\label{as-cor}\;\;
 $ |Ee_ie_k|=O((|k-i|+1)^{-\kappa}),\;\;\mbox{with some}\;\; \kappa>1.$
 \end{assumption}

 \begin{corollary}\label{cor-sq} If $H_0$ holds and Assumptions \ref{as-1}--\ref{as-3} are satisfied, then we have that
 \begin{equation}\label{cv}
 \int_0^1 Z_N^2(t)dt\;\;\stackrel{{\mathcal D}}{\to}\;\;
\int_0^1\Gamma^2(t)dt.
 \eeq
 If, in addition, Assumption \ref{as-cor} also holds, then we have that
 \begin{equation}\label{ad}
 \int_{1/N}^{1-1/N} \frac{Z_N^2(t)}{t(1-t)}dt\;\;\stackrel{{\mathcal D}}{\to}\;\;\int_0^1\frac{\Gamma^2(t)}{t(1-t)}dt.
 \eeq
 \end{corollary}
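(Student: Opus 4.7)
The plan for \eqref{cv} is a direct application of the continuous mapping theorem. The functional $f\mapsto \int_0^1 f^2(t)\,dt$ is continuous on $D[0,1]$ at every $f\in C[0,1]$. Since $a$ is bounded on $[0,1]$ (having bounded variation there), $b(t)=\sigma^2\int_0^t a^2$ is continuous and nondecreasing, so $\Gamma(t)=W(b(t))-tW(b(1))$ has almost surely continuous paths. Combining this with Corollary~\ref{basic-cor} yields \eqref{cv} immediately.

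For \eqref{ad} the weight $1/(t(1-t))$ blows up at the endpoints, so continuous mapping cannot be applied directly. My plan is the standard truncation argument: fix a small $\eta\in(0,1/2)$ and split
\[
\int_{1/N}^{1-1/N}\frac{Z_N^2(t)}{t(1-t)}\,dt=I_N(\eta)+R_N(\eta),
\]
where $I_N(\eta)=\int_\eta^{1-\eta}Z_N^2(t)/(t(1-t))\,dt$ and $R_N(\eta)$ is the sum of the two tail integrals over $[1/N,\eta]$ and $[1-\eta,1-1/N]$. On $[\eta,1-\eta]$ the weight is bounded and continuous, so the continuous mapping argument of part (1) gives $I_N(\eta)\stackrel{\mathcal{D}}{\to}\int_\eta^{1-\eta}\Gamma^2(t)/(t(1-t))\,dt$. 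On the limiting side I would check, using the covariance formula \eqref{e:Cts}, that $t\mapsto E[\Gamma^2(t)/(t(1-t))]$ is bounded on $(0,1)$: near $t=0$, $b(t)/t\le\sigma^2\sup_{u\in[0,1]}a^2(u)$, and a symmetric Taylor expansion $b(1)-b(t)=O(1-t)$ handles $t\to 1$. Fubini then gives $\int_0^1\Gamma^2(t)/(t(1-t))\,dt<\infty$ a.s., and monotone convergence delivers $\int_\eta^{1-\eta}\Gamma^2/(t(1-t))\to\int_0^1\Gamma^2/(t(1-t))$ a.s.\ as $\eta\to 0$.

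The crucial step, and the place where Assumption~\ref{as-cor} enters, is the uniform-in-$N$ tail bound $\lim_{\eta\to 0}\sup_N P(R_N(\eta)>\delta)=0$ for every $\delta>0$. I would obtain this via Markov by proving the pointwise bound $EZ_N^2(t)\le C\,t(1-t)$ with a constant $C$ independent of $t\in[1/N,1-1/N]$ and $N$. Writing $S_k=\sum_{\ell=1}^k u_\ell$, $m=\lfloor Nt\rfloor$, and $\alpha=m/N$, one has
\[
NEZ_N^2(t)=(1-\alpha)^2\mathrm{Var}(S_m)+\alpha^2\mathrm{Var}(S_N-S_m)-2\alpha(1-\alpha)\mathrm{Cov}(S_m,S_N-S_m).
\]
Assumption~\ref{as-cor}, combined with boundedness of $a$, gives $\mathrm{Var}(S_k)\le C'k$ uniformly in $k,N$, since $\sum_{i,j\le k}|Ee_ie_j|\le k\cdot 2\sum_{d\ge 0}(d+1)^{-\kappa}<\infty$. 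Cauchy--Schwarz controls the covariance term, and the elementary identity $(1-t)^2t+t^2(1-t)=t(1-t)$ collapses the estimate to $EZ_N^2(t)\le Ct(1-t)$, whence $ER_N(\eta)\le 2C\eta$. A standard diagonal argument combining the weak convergence of $I_N(\eta)$, the a.s.\ convergence of $I(\eta)$ to the full integral, and the uniform tail bound on $R_N(\eta)$ then yields \eqref{ad}.

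The main obstacle is exactly this uniform bound $EZ_N^2(t)\le Ct(1-t)$: without some summable control on the covariances of $\{e_i\}$ the partial-sum variances need not grow linearly, the weighted tails could be non-negligible, and the limiting integral $\int_0^1 \Gamma^2/(t(1-t))$ could even fail to be finite. The condition $\kappa>1$ in Assumption~\ref{as-cor} is precisely what makes the covariance sum absolutely convergent and drives the argument through.
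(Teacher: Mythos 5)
Your proof of \eqref{cv} coincides with the paper's: continuous mapping applied to Corollary~\ref{basic-cor}, noting that $\Gamma$ has continuous paths. For \eqref{ad} your argument is correct but organized differently from the paper's. The paper first upgrades Corollary~\ref{basic-cor} via the Skorohod--Dudley--Wichura representation to a coupling $\sup_{0\le t\le 1}|Z_N(t)-\Gamma_N(t)|=o_P(1)$ with $\Gamma_N\stackrel{\mathcal D}{=}\Gamma$, and then controls the difference of the two weighted integrals over all of $[1/N,1-1/N]$ at once by a Cauchy--Schwarz step with the weight split as $(t(1-t))^{-(1-2\delta)/2}\cdot(t(1-t))^{-(1+2\delta)/2}$; the endpoint singularity is absorbed because $\int_0^1(t(1-t))^{-1+2\delta}dt<\infty$ and because the slightly overweighted integrals $\int Z_N^2/(t(1-t))^{1+2\delta}$ and $\int\Gamma_N^2/(t(1-t))^{1+2\delta}$ are $O_P(1)$. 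You instead truncate at $\eta$, apply continuous mapping on $[\eta,1-\eta]$, and dispose of the tails with Markov plus the triangular approximation theorem (Billingsley's Theorem~4.2). Both routes stand or fall on exactly the same two moment bounds, $EZ_N^2(t)\le c\,t(1-t)$ on $[1/N,1-1/N]$ (this is where Assumption~\ref{as-cor} and the summability $\sum_d (d+1)^{-\kappa}<\infty$ enter, identically in both proofs) and $E\Gamma^2(t)\le c\,t(1-t)$, so you correctly identified the crux. The paper's coupling approach avoids the three-epsilon bookkeeping of the truncation scheme at the cost of invoking the a.s.\ representation theorem; your version is marginally more elementary in its probabilistic inputs but requires the extra uniform-in-$N$ tightness statement $\lim_{\eta\to0}\sup_N P(R_N(\eta)>\delta)=0$, which your bound $ER_N(\eta)\le 2C\eta$ does deliver. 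One small remark: your variance decomposition of $NEZ_N^2(t)$ with the cross term handled by Cauchy--Schwarz works, but it is slightly simpler to bound $E(\sum_{\ell=k}^m u_\ell)^2\le c(m-k)$ for arbitrary blocks, as the paper does, and read off the $t(1-t)$ factor from the two complementary blocks directly.
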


 The statistic in \eqref{cv} is a version of the Cram\'er--von Mises
 statistic while \eqref{ad} is a modification of the Anderson--Darling
 statistic (cf.\ Shorack and Wellner (1986, p.\ 148)).

 We would like to note that $Z_N(t)$ is not ``symmetric" since by
 definition $Z_N(t)=0$, if $0<t<1/N$ while $|Z_N(t)|>0$ if
 $1-1/N<t<1$. However,
\[
 \tilde{Z}_N(t)=N^{-1/2}\left(\sum_{\ell=1}^{\lf (N+1)u\rf}X_\ell-\frac{\lf (N+1)u \rf}{N}\sum_{\ell=1}^NX_\ell\right)
\]
is ``tied down" in a neighborhood of 0 as well as 1. Relation
\eqref{ad} can be replaced with
 \begin{equation}\label{ad-m}
 \int_{0}^{1} \frac{\tilde{Z}_N^2(t)}{t(1-t)}dt\;\;\stackrel{{\mathcal D}}{\to}\;\;\int_0^1\frac{\Gamma^2(t)}{t(1-t)}dt.
 \end{equation}

 We conclude this section by establishing the asymptotic behavior of
 the tests statistics when $H_0$ does not hold. Let
\[
 d_k=\sum_{\ell=1}^k(\mu_\ell-\bar{\mu}), \;\;1\leq k \leq N,\;\;\mbox{with}\;\;\bar{\mu}=\frac{1}{N}\sum_{\ell=1}^N\mu_\ell,
\]
be the drift term of the CUSUM process.  We impose the following minor
restrictions on the expected values of the observations:

 \begin{assumption}\label{mu-con} \quad $\max_{1\leq k \leq N}|\mu_k|=O(1)$ and
 $$
 d(u)=\lim_{N\to \infty}\frac{d_{\lf Nu\rf}}{N},\quad 0\leq u \leq 1,
 $$
 exists. (We use $d_{\lf Nu\rf}=0$, if $0\leq u <1/N.$)
 \end{assumption}

 We would like to point out that Assumption \ref{mu-con} is
 automatically satisfied under $H_0$, since $d_k=0$ for all $1\leq k
 \leq N$, i.e.\ there is no drift.

The following theorem will be used to establish the consistency of the
tests.

 \begin{theorem}\label{alt-1} If   Assumptions \ref{as-1}--\ref{as-3}  and \ref{mu-con} are satisfied, then we have that
 \begin{equation}\label{alt-st-1}
 \frac{1}{N}\int_0^1 Z_N^2(t)dt\;\stackrel{P}{\to}\int_0^1 d^2(u)du.
 \eeq
 If, in addition, Assumption \ref{as-cor} also holds, then we have that
 \begin{equation}\label{alt-st-2}
 \frac{1}{N}\int_{1/N}^{1-1/N} \frac{Z_N^2(t)}{t(1-t)}dt\;\stackrel{P}{\to}\int_0^1 \frac{d^2(t)}{t(1-t)}dt.
 \eeq
 \end{theorem}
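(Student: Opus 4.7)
The natural approach is to decompose the CUSUM process into a deterministic drift part and a stochastic error part. Writing $X_\ell=\mu_\ell+u_\ell$ and recalling that $d_N=0$, one has $Z_N(t)=D_N(t)+U_N(t)$, where
\[
D_N(t)=N^{-1/2}d_{\lf Nt\rf},\qquad U_N(t)=N^{-1/2}\Bigl(\sum_{\ell=1}^{\lf Nt\rf}u_\ell-\frac{\lf Nt\rf}{N}\sum_{\ell=1}^N u_\ell\Bigr).
\]
Thus $Z_N^2=D_N^2+2D_NU_N+U_N^2$, and the strategy is to show that, after dividing by $N$, only the drift square survives in the limit, while the error-square and the cross terms vanish in probability.

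For \eqref{alt-st-1}, the drift piece equals $\frac{1}{N^2}\int_0^1 d_{\lf Nt\rf}^2\,dt=\int_0^1 (d_{\lf Nt\rf}/N)^2\,dt$. By Assumption \ref{mu-con}, $d_{\lf Nt\rf}/N\to d(t)$ pointwise in $t$, and $|d_k|/N\le 2\max_\ell|\mu_\ell|=O(1)$, so dominated convergence gives convergence to $\int_0^1 d^2(u)\,du$. For the error contribution, Corollary \ref{basic-cor} (applied to the centered errors, which is really its content) implies $U_N\Rightarrow\Gamma$ in ${\mathcal D}[0,1]$, so by the continuous mapping theorem $\int_0^1 U_N^2(t)\,dt=O_P(1)$ and hence $\frac{1}{N}\int_0^1 U_N^2(t)\,dt=o_P(1)$. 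The cross term is then handled by the Cauchy--Schwarz inequality:
\[
\Bigl|\frac{1}{N}\int_0^1 D_N(t)U_N(t)\,dt\Bigr|\le\Bigl(\frac{1}{N}\int_0^1 D_N^2\Bigr)^{1/2}\Bigl(\frac{1}{N}\int_0^1 U_N^2\Bigr)^{1/2}=O(1)\cdot o_P(1)=o_P(1).
\]

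For the weighted statement \eqref{alt-st-2}, the same decomposition is used but the integrability near the endpoints $t=0,1$ has to be secured. The main point, and the step I expect to be slightly delicate, is that under Assumption \ref{mu-con} one has the two-sided bound $|d_k|\le C\min(k,N-k)$, because $d_k=\sum_{\ell=1}^k(\mu_\ell-\bar\mu)=-\sum_{\ell=k+1}^N(\mu_\ell-\bar\mu)$ and $\max_\ell|\mu_\ell|=O(1)$. Therefore
\[
\frac{D_N^2(t)}{t(1-t)}=\frac{(d_{\lf Nt\rf}/N)^2}{t(1-t)}\le C^2\,\frac{\min(t,1-t)^2}{t(1-t)}\le C^2,
\]
which is a uniform bound allowing dominated convergence and yielding $\frac{1}{N}\int_{1/N}^{1-1/N}D_N^2/(t(1-t))\,dt\to\int_0^1 d^2(t)/(t(1-t))\,dt$. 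The error piece is controlled by \eqref{ad} in Corollary \ref{cor-sq}, which gives $\int_{1/N}^{1-1/N}U_N^2(t)/(t(1-t))\,dt=O_P(1)$, hence $o_P(1)$ after division by $N$; and the cross term is absorbed by Cauchy--Schwarz exactly as above. The combination of these three bounds delivers \eqref{alt-st-2}.
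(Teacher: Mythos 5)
Your proposal is correct and follows essentially the same route as the paper: decompose $Z_N$ into the error CUSUM plus the drift $N^{-1/2}d_{\lf Nt\rf}$, kill the error-square and cross terms after division by $N$ (the paper uses the sup bound $\sup_t|U_N(t)|=O_P(1)$ from Theorem \ref{basic} where you use the continuous mapping theorem, an immaterial difference), and pass the drift term to the limit by dominated convergence. For \eqref{alt-st-2} the paper merely says "similar arguments" and omits the details; your bound $|d_k|\le C\min(k,N-k)$, which makes $D_N^2(t)/(t(1-t))$ uniformly bounded near the endpoints, is exactly the right observation to justify that step.
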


If
\begin{equation}\label{alt-as}
 \int_0^1 d^2(u)du>0
 \eeq
holds, then
$$
\int_0^1 Z_N^2(t)dt\;\;\;\stackrel{P}{\to}\;\infty\;\;\;\mbox{and}
\;\;\;\int_{1/N}^{1-1/N} \frac{Z_N^2(t)}{t(1-t)}dt\;\stackrel{P}{\to}\;\;
\infty.
$$
However, to establish the consistency, we have to carefully examine
the asymptotic behavior of the estimated eigenvalues; this is
done in Sections \ref{sec-est-1} and  \ref{sec-est-2}. In the following,
condition \eqref{alt-as} is adopted as the formal definition of $H_A$.

\subsection{Change point in the parameters of linear and nonlinear
regression} \label{ss:cr} ~\\
Section~\ref{ss:cm}  focused on the theory of testing for
a change point in mean in order to explain the essence of our
approach and provide the details in that most common setting.
In this section,  we consider more general regression settings.
The proofs use suitably defined  model residuals which
approximate the unobservable heteroskedastic errors $u_i$.
The tests of Section~\ref{ss:cm}  correspond to the residuals
$\hat u_i = X_i - \bar X_N, \ 1 \le i \le N$. Once the
residuals are defined, the asymptotic
arguments parallel those used to establish the results of
Section~\ref{ss:cm}, so we just outline the proofs.

Consider first the usual  linear regression
\[
X_i = \bx_i^\top \bbe_i + u_i, \ \ \ 1 \le i \le N.
\]
We wish to test $H_0:\ \bbe_1 = \bbe_2 = \ldots \bbe_N$ against
the change point alternative. The following standard assumption is
made.
\begin{assumption} \label{a:xu-indep}
The sequences $\{ \bx_i \}$ and $\{ u_i \}$ are independent.
The sequence  $\{ \bx_i \}$ is stationary, ergodic,
 and $E\| \bx_0 \|^2 < \infty$.
\end{assumption}

We use the  least squares estimator $\hat\bbe_N=\bA_N^{-1}\bX_N$,
where
\[
\bA_N = \begin{bmatrix}
    \bx_{1}^\T      \\
    \vdots \\
    \bx_N^\T
\end{bmatrix}^\T
\begin{bmatrix}
    \bx_{1}^\T    \\
    \vdots \\
    \bx_N^\T
\end{bmatrix}, \\\\ \quad \bx_i = [ x_i(1), \ldots, x_i(p)]^\T,
\]
 where $p$ is the dimension of the parameter vector. By the ergodicity of the regressors, $ \bA_N /N\convP \bA_0$ according to the ergodic theorem.
 The residuals are defined by
\beq\label{res-1}
\hat u_i = X_i - \bx_i^T \hat\bbe_N,\;\;\;1\leq i \leq N.
\eeq

\begin{theorem} \label{p:lr}
Under Assumptions 2.1--\ref{as-cor}, \ref{a:xu-indep}, and assuming that $\bA_0$ is nonsingular,
Corollaries \ref{basic-cor} and \ref{cor-sq} remain valid for the residuals defined in  \eqref{res-1}.
\end{theorem}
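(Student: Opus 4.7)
The strategy is to reduce the regression problem to the mean case already treated in Section~\ref{ss:cm}. Under $H_0$, $\bbe_i\equiv\bbe$ implies $X_\ell=\bx_\ell^\T\bbe+u_\ell$ and hence $\hat u_\ell=u_\ell-\bx_\ell^\T(\hat\bbe_N-\bbe)$. Substituting this into the residual CUSUM
\[
\hat Z_N(t)=N^{-1/2}\left(\sum_{\ell=1}^{\lf Nt\rf}\hat u_\ell-\frac{\lf Nt\rf}{N}\sum_{\ell=1}^N\hat u_\ell\right)
\]
produces the decomposition $\hat Z_N(t)=Z_N^{(u)}(t)-g_N(t)^\T\sqrt{N}(\hat\bbe_N-\bbe)$, where $Z_N^{(u)}$ is the same functional applied to the unobservable $u_\ell$ (to which Theorem~\ref{basic} plus continuous mapping deliver $Z_N^{(u)}\Rightarrow\Gamma$ exactly as in Corollary~\ref{basic-cor}) and
\[
g_N(t)=\frac{1}{N}\sum_{\ell=1}^{\lf Nt\rf}\bx_\ell-\frac{\lf Nt\rf}{N}\cdot\frac{1}{N}\sum_{\ell=1}^N\bx_\ell
\]
is the CUSUM built from the regressors. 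The entire proof reduces to controlling this correction term.

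The main new step is the rate $\sqrt{N}(\hat\bbe_N-\bbe)=O_P(1)$. From the normal equations, $\hat\bbe_N-\bbe=(\bA_N/N)^{-1}\cdot N^{-1}\sum_{\ell=1}^N\bx_\ell u_\ell$, and Birkhoff's ergodic theorem under Assumption~\ref{a:xu-indep} yields $\bA_N/N\to\bA_0$ a.s., invertible by hypothesis. It therefore suffices to bound $E\|N^{-1/2}\sum\bx_\ell u_\ell\|^2$. Independence of $\{\bx_i\}$ and $\{u_i\}$ turns this expectation into
\[
\frac{1}{N}\sum_{i,k=1}^N E(\bx_i^\T\bx_k)\,a(i/N)\,a(k/N)\,E(e_ie_k),
\]
and the boundedness of $a$ from Assumption~\ref{as-2}, the stationarity bound $|E(\bx_i^\T\bx_k)|\le E\|\bx_0\|^2$, and the summability $\sum_j(j+1)^{-\kappa}<\infty$ from Assumption~\ref{as-cor} show this is $O(1)$. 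This variance bound is where Assumption~\ref{as-cor} is truly needed for the regression extension and is the main obstacle of the proof, since it is the only place the regression structure interacts nontrivially with the heteroscedastic errors.

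To conclude, the uniform ergodic theorem applied componentwise (via monotone decompositions of $t\mapsto N^{-1}\sum_{\ell=1}^{\lf Nt\rf}\bx_\ell$, whose limit is $tE\bx_0$) gives $\sup_{t\in[0,1]}\|g_N(t)\|\to 0$ a.s. Combining this with the $O_P(1)$ bound yields $\sup_t|\hat Z_N(t)-Z_N^{(u)}(t)|=o_P(1)$, so $\hat Z_N\Rightarrow\Gamma$ in $\mathcal{D}[0,1]$ by Slutsky; this proves the analog of Corollary~\ref{basic-cor}, and \eqref{cv} of Corollary~\ref{cor-sq} then follows from continuous mapping. The Anderson--Darling statement \eqref{ad} is the delicate part because the $1/[t(1-t)]$ weight amplifies the correction near the endpoints, but $g_N$ is tied down at $t=0$ and $t=1$; a refinement of the ergodic argument based on the second-moment assumption on $\{\bx_i\}$ shows that $\int_{1/N}^{1-1/N}\|g_N(t)\|^2/[t(1-t)]\,dt\to 0$ in probability, so the weighted integral of the squared correction vanishes and \eqref{ad} transfers. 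This endpoint analysis is a direct adaptation of the one already used in the proof of Corollary~\ref{cor-sq}.
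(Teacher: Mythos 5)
Your proof is correct and follows essentially the same route as the paper's: the same decomposition $\hat u_\ell=u_\ell-\bx_\ell^\T(\hat\bbe_N-\bbe)$, the same second-moment bound giving $N^{1/2}\|\hat\bbe_N-\bbe\|=O_P(1)$ via Assumptions \ref{as-1}, \ref{as-2}, \ref{as-cor} and \ref{a:xu-indep}, and the same ergodic-theorem control of the regressor CUSUM. If anything, your explicit treatment of the weighted (Anderson--Darling) integral near the endpoints is more careful than the paper's, which stops at the uniform approximation.
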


We now turn to the nonlinear regression
\[
X_i = h(\bx_i, \btheta_i) + u_i, \ \ \ 1 \le i \le N,
\]
where $\btheta_i$ are $p$--dimensional parameter vectors,
equal under $H_0$. The unknown
common parameter vector is then $\btheta_0$. It is estimated
by minimizing
\[
L_N(\btheta) = \sum_{i=1}^N (X_i - h(\bx_i, \btheta))^2
\]
over the parameter space $\bTheta$. The following usual assumption
is made.
\begin{assumption} \label{a:theta} The parameter space $\bTheta$ is
a compact subset of ${\mathbb R}^p$  and $\btheta_0$ is its interior point.
\end{assumption}
We impose the following
assumption on the function $h(\cdot, \cdot)$.
\begin{assumption} \label{a:h} The following conditions hold:
\begin{align*}
&\sup_{\btheta \in \bTheta} E h^2(\bx_0, \btheta) < \infty, \\
&\sup_{\btheta \in \bTheta} \left \| \frac{\partial^2}{\partial \btheta^2}
h(\bx_i, \btheta) \right \| \le M(\bx_i), \ \ \ EM(\bx_0) < \infty
\end{align*}
and
\[
E \left \|\frac{\partial}{\partial \btheta} h(\bx_0, \btheta_0) \right \|^2
 < \infty,
\]
\[
E [ h(\bx_0, \btheta_0) - h(\bx_0, \btheta) ]^2 > 0, \ \ \
{\rm if} \ \ \btheta \neq \btheta_0.
\]
\end{assumption}

The conditions formulated in Assumption~\ref{a:h} ensure that
under $H_0$ the differences between the functionals based on the
unobservable errors $u_i$ and the residuals
\beq\label{res-2}
\hat u_i = X_i - h(\bx_i, \hat\btheta)
\eeq
are asymptotically negligible in the sense that they do not affect
the limits in Corollaries \ref{basic-cor} and \ref{cor-sq}.

\begin{theorem} \label{p:nr}
Under Assumptions 2.1--\ref{as-cor} and \ref{a:theta}-\ref{a:h},
Corollaries \ref{basic-cor} and \ref{cor-sq} remain valid for the residuals defined in  \eqref{res-2}.
\end{theorem}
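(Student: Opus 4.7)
The plan is to show that the residual-based CUSUM process differs from the error-based CUSUM process by an $o_P(1)$ term in the uniform topology on $[0,1]$, after which the limits in Corollaries \ref{basic-cor} and \ref{cor-sq} transfer by Slutsky and continuous mapping. Under $H_0$, $X_i = h(\bx_i, \btheta_0) + u_i$, so $\hat u_i = u_i + \Delta_i$ with $\Delta_\ell := h(\bx_\ell, \btheta_0) - h(\bx_\ell, \hat\btheta)$, and after centering the two CUSUM processes differ by
\[
R_N(t) = N^{-1/2}\bigg\{\sum_{\ell=1}^{\lf Nt\rf} \Delta_\ell - \frac{\lf Nt\rf}{N}\sum_{\ell=1}^N \Delta_\ell\bigg\}.
\]
The task reduces to proving $\sup_{t\in [0,1]}|R_N(t)| = o_P(1)$, parallel to the bookkeeping in Theorem \ref{p:lr}.

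The preparatory step is to establish $\hat\btheta - \btheta_0 = O_P(N^{-1/2})$. Consistency $\hat\btheta \convP \btheta_0$ is the standard M-estimator argument applied to $L_N(\btheta)/N$: Assumption \ref{a:h} supplies a dominating-function bound, stationarity and ergodicity of $\{\bx_i\}$ (together with its independence from $\{u_i\}$, as in Assumption \ref{a:xu-indep}) give a uniform ergodic theorem on compact $\bTheta$, and the cross term $N^{-1}\sum u_i[h(\bx_i, \btheta_0) - h(\bx_i, \btheta)]$ is $o_P(1)$ uniformly thanks to independence and Assumption \ref{as-cor}; identifiability in Assumption \ref{a:h} closes the argument. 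Since $\btheta_0$ is interior to $\bTheta$ by Assumption \ref{a:theta}, the score equation $\sum_i (X_i - h(\bx_i,\hat\btheta))\frac{\partial h}{\partial\btheta}(\bx_i,\hat\btheta)=0$ combined with a one-term Taylor expansion yields
\[
\hat\btheta - \btheta_0 = \bigl[\bA_0 + o_P(1)\bigr]^{-1}\frac{1}{N}\sum_{i=1}^N u_i\,\frac{\partial h}{\partial\btheta}(\bx_i,\btheta_0) + o_P(N^{-1/2}),
\]
where $\bA_0 = E\bigl[\frac{\partial h}{\partial\btheta}(\bx_0,\btheta_0)\frac{\partial h}{\partial\btheta}(\bx_0,\btheta_0)^\T\bigr]$ is nonsingular by identifiability. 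Conditioning on $\{\bx_i\}$ and using boundedness of $a$ together with Assumption \ref{as-cor} gives $\text{Var}\bigl(\sum_i u_i \frac{\partial h}{\partial\btheta}(\bx_i,\btheta_0)\bigr) = O(N)$, hence the $\sqrt{N}$-rate.

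A second-order Taylor expansion splits $R_N(t)$ into a linear and a quadratic piece. The quadratic remainder is bounded in sup norm by $N^{-1/2}\|\hat\btheta-\btheta_0\|^2\sum_\ell M(\bx_\ell) = N^{-1/2}\cdot O_P(N^{-1})\cdot O_P(N) = O_P(N^{-1/2})$, by the ergodic theorem applied to $M(\bx_\ell)$ (which has finite mean by Assumption \ref{a:h}). The linear piece equals
\[
-N^{-1/2}\bigg[\sum_{\ell=1}^{\lf Nt\rf} \frac{\partial h}{\partial\btheta}(\bx_\ell,\btheta_0) - \frac{\lf Nt\rf}{N}\sum_{\ell=1}^N \frac{\partial h}{\partial\btheta}(\bx_\ell,\btheta_0)\bigg]^\T(\hat\btheta-\btheta_0).
\]
A functional ergodic theorem for centered partial sums of the stationary ergodic sequence $\frac{\partial h}{\partial\btheta}(\bx_\ell,\btheta_0)$ (whose mean exists by Assumption \ref{a:h}) shows that the bracketed quantity is $o(N)$ uniformly in $t$, so this piece is $N^{-1/2}\cdot o(N)\cdot O_P(N^{-1/2}) = o_P(1)$. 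Combining, $\sup_t |R_N(t)| = o_P(1)$, which transfers Corollary \ref{basic-cor}. For Corollary \ref{cor-sq}, the unweighted $L^2$ limit follows immediately by continuous mapping; the Anderson--Darling version \eqref{ad} is handled via the $[1/N, 1-1/N]$ truncation exactly as in the linear-regression argument underlying Theorem \ref{p:lr}.

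The main obstacle is obtaining the $\sqrt{N}$-rate for $\hat\btheta$ without any mixing assumption on $\{e_i\}$: classical nonlinear-regression asymptotics control the score $N^{-1/2}\sum_i u_i\frac{\partial h}{\partial\btheta}(\bx_i,\btheta_0)$ via mixing, whereas here only the summable-correlation bound of Assumption \ref{as-cor} is available. The resolution, as in Theorem \ref{p:lr}, is to condition on the regressors and reduce the variance estimate to a direct calculation with the kernel $|E e_i e_k|$.
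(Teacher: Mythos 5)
Your proposal follows essentially the same route as the paper: consistency of $\hat\btheta$ via a uniform ergodic theorem and identifiability, the $\sqrt N$-rate from the score equation with the variance of $\sum_i u_i\,\partial h/\partial\btheta(\bx_i,\btheta_0)$ bounded by conditioning on the regressors and invoking Assumption \ref{as-cor}, and then a two-term Taylor expansion of the residual CUSUM whose linear part is killed by the ergodic theorem applied to the centered gradient sums and whose quadratic part is absorbed by the dominating function $M(\bx_i)$. This matches the paper's proof step for step, so the proposal is correct and not a genuinely different argument.
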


The consistency of the tests in both linear and nonlinear regression
settings can be established in a framework analogous to that
considered in Section~\ref{ss:cm}.

\section{Computation of the limit distributions based on
Karhunen--Lo\'eve expansions} \label{s:comp}
The Karhunen--Lo\'eve expansion yields that
 \begin{align}\label{khl-1}
 \int_0^1\Gamma^2(t)dt=\sum_{i=1}^\infty \lambda_i\xi_i^2,
 \end{align}
 where $\xi_1, \xi_2, \ldots$ are independent and identically
 distributed standard normal random variables, and $\lambda_1\geq
 \lambda_2\geq \ldots$ are the eigenvalues of the operator associated
 with the kernel $C(t,s)$ defined in \eqref{e:Cts}. Specifically,
\begin{equation} \label{khl-2}
 \lambda_i\varphi_i(t)=\int_0^1C(t,s)\varphi_i(s)ds,\;\;\;1\leq i
 <\infty.
\end{equation}
The $\varphi_1, \varphi_2, \ldots$ are orthonormal
 functions, the eigenfunctions of $C(t,s)$. Similarly,
 \begin{align}\label{khl-3}
 \int_0^1\frac{\Gamma^2(t)}{t(1-t)}dt=\sum_{i=1}^\infty \tau_i\xi_i^2,
 \end{align}
 where $\tau_1\geq \tau_2\geq \ldots$ are the eigenvalues of
 $$
 D(t,s)=\frac{C(t,s)}{(t(1-t)s(1-s))^{1/2}}.
 $$

 The eigenvalues $\lambda_i, i\geq 1,$ as well as $\tau_i, i\geq 1, $
 can be estimated  from the sample. This is addressed
 in Sections \ref{sec-est-1} and \ref{sec-est-2}. Using \eqref{khl-1} or
 \eqref{khl-3},  we can obtain critical values for the Cram\'er--von Mises
 and Anderson--Darling statistics by proceeding as follows.
 If $\widehat{C}_N(t,s)$ is an
 $L^2$ consistent estimate of $C(t,s)$, then the empirical eigenvalues
 $\hat{\lambda}_{N,1} \geq \hat{\lambda}_{N,2} \geq
 \hat{\lambda}_{N,3} \geq \ldots$ of $\widehat{C}_N(t,s)$ can be used to
 approximate the sum on the right hand side of \eqref{khl-1}, i.e.\ we
 use the distribution of
\begin{equation}\label{H-def}
\hat{H}_{N,m}=\sum_{\ell=1}^m \hat{\lambda}_{N,i}\xi_i^2,\;\;\mbox{with some large enough}\;m.
 \end{equation}
  The empirical eigenvalues satisfy the integral equation
 \[
  \hat{\lambda}_{N,i}\hat{\varphi}_{N,i}(t)=\int_0^1\widehat{C}_N(t,s)\hat{\varphi}_{N,i}(s)ds,
  \]
  where $\hat{\varphi}_{N,i}(t), i\geq 1,$ are orthonormal
  eigenfunctions. The same method works to approximate the
  distribution in \eqref{khl-3}.

We now turn to the details of the computation of the
$\hat{\lambda}_{N,i}$ and the $\hat{\tau}_{N,i}$, first in the case
of uncorrelated errors, then in the general case of correlated errors.

\medskip
\subsection{Estimation of the eigenvalues in case of  uncorrelated errors}
\label{sec-est-1} ~\\

To illustrate our method,  we first consider uncorrelated observations:
 \begin{assumption}\label{ass-un}
 \begin{displaymath}
 Ee_ie_j=\left\{
 \begin{array}{ll}
 0,\quad &\mbox{if}\;\;i\neq j,
 \vspace{.3cm}\\
 \sigma^2,\quad &\mbox{if}\;\;i=j.
 \end{array}
 \right.
 \end{displaymath}
 \end{assumption}
Assumption \ref{ass-un} holds for a large class of random
 variables, including GARCH--type sequences,  Francq and Zakoian (2010).

 Let
 $$
 \bar{X}_N=\frac{1}{N}\sum_{\ell=1}^NX_\ell
 $$
 and define
 $$
 H_N(u)=\frac{1}{N}\sum_{i=1}^{\lf Nu\rf}(X_i-\bar{X}_N)^2, \quad 0\leq u\leq 1.
 $$
We estimate $C(t,s)$ with
\begin{equation} \label{e:C-HU}
\widehat{C}_N(t,s)=H_N(t\wedge s)-tH_N(s)-sH_N(t)+stH_N(1).
\end{equation}
Let
 $$
 g(u)=\lim_{N\to\infty}\frac{1}{N}\sum_{\ell=1}^{\lf Nu\rf}(\mu_\ell-\bar{\mu})^2.
 $$
 It is clear that $g(u)=0$ for all $0\leq u \leq 1$ under $H_0$.
\begin{theorem}\label{under-th} We assume that Assumptions \ref{as-1}--\ref{as-3}, \ref{ass-un} are satisfied and $\{e_i,-\infty<\infty\}$ is a stationary and ergodic sequence with $Ee_0=0$ and $Ee_0^2=\sigma^2<\infty$.\\
(i)  If $H_0$ holds, then
$$
\int_0^1\!\!\int_0^1\left( \widehat{C}_N(t,s)-C(t,s)  \right)^2dtds=o_P(1).
$$
(ii)  If $H_A$ holds, and in addition Assumption \ref{mu-con} also holds, then
$$
\int_0^1\!\!\int_0^1\left( \widehat{C}_N(t,s)-C^*(t,s)  \right)^2dtds=o_P(1),
$$
where
$$
C^*(t,s)=C(t,s)+g(t\wedge s)-tg(s)-sg(t)+tsg(1).
$$
\end{theorem}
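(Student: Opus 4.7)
The plan is to reduce the bivariate $L^2$ statement to a univariate convergence for $H_N$. Define $\tilde b(u) = b(u) + g(u)$ (with $g \equiv 0$ under $H_0$) and $F_N(u) = H_N(u) - \tilde b(u)$. Observe that $C^*(t,s) = \tilde b(t\wedge s) - t\tilde b(s) - s\tilde b(t) + ts\tilde b(1)$, so by \eqref{e:C-HU} one has the clean identity
\[
\widehat C_N(t,s) - C^*(t,s) = F_N(t\wedge s) - tF_N(s) - sF_N(t) + tsF_N(1),
\]
and routine integration yields $\int_0^1\!\!\int_0^1(\widehat C_N - C^*)^2\,dt\,ds \le K\bigl(\int_0^1 F_N^2\,du + F_N(1)^2\bigr)$ for an absolute constant $K$. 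It therefore suffices to establish $F_N(u)\convP 0$ pointwise in $u$ together with the uniform-in-$u$ stochastic bound $\sup_u|F_N(u)| = O_P(1)$; a bounded convergence argument then delivers $\int F_N^2\convP 0$.

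For the pointwise convergence I would expand
\[
(X_i - \bar X_N)^2 = (\mu_i - \bar\mu)^2 + 2(\mu_i - \bar\mu)(u_i - \bar u_N) + (u_i - \bar u_N)^2
\]
and dispose of each piece separately. The mean-squared part averages to $g(u)$ by definition. The cross term has second moment $\sigma^2 N^{-2}\sum(\mu_i - \bar\mu)^2 a^2(i/N) = O(1/N)$ by Assumption~\ref{ass-un} and the boundedness of $\mu_i$ and $a$, while the companion term $\bar u_N\cdot N^{-1}\sum(\mu_i-\bar\mu) = O_P(N^{-1/2})$ by Theorem~\ref{basic} and Assumption~\ref{mu-con}. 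The error part expands as $N^{-1}\sum u_i^2 - 2\bar u_N\cdot N^{-1}\sum_{i\le k} u_i + (k/N)\bar u_N^2$; the last two summands are $O_P(N^{-1})$ uniformly in $k$ by Theorem~\ref{basic}. The leading piece $N^{-1}\sum a^2(i/N) e_i^2$ is then split into $\sigma^2\cdot N^{-1}\sum a^2(i/N) \to \sigma^2 \int_0^u a^2 = b(u)$ via Riemann approximation (using $a^2 \in BV$), plus a stochastic remainder analyzed below.

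The main obstacle, and the only place where the stationarity and ergodicity of $\{e_i\}$ are used beyond the FCLT in Assumption~\ref{as-3}, is controlling the weighted partial sum $N^{-1}\sum_{i=1}^k a^2(i/N)(e_i^2 - \sigma^2)$ uniformly in $k\le N$. Since $\{e_i^2\}$ is stationary ergodic with mean $\sigma^2$, the ergodic theorem gives $S_k/k\to 0$ a.s.\ for $S_k := \sum_{i\le k}(e_i^2 - \sigma^2)$, which upgrades (splitting at a sample-path-dependent cutoff beyond which $|S_k|/k<\varepsilon$) to $\max_{k\le N}|S_k|/N\to 0$ a.s. Summation by parts against the BV weight $a^2(\cdot/N)$ then yields
\[
\max_{k\le N}\left|N^{-1}\sum_{i=1}^k a^2(i/N)(e_i^2 - \sigma^2)\right| \le \bigl(\|a^2\|_\infty + V(a^2)\bigr)\cdot \max_{i\le N}\frac{|S_i|}{N},
\]
where $V(a^2) < \infty$ by Assumption~\ref{as-2}. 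Assembling the pieces gives $H_N(u)\convP \tilde b(u)$ for every fixed $u$; the uniform bound $|F_N(u)| \le H_N(1) + \|\tilde b\|_\infty = O_P(1)$ permits the bounded convergence passage, which closes both parts of the proof simultaneously.
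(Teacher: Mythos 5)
Your proposal is correct and follows essentially the same route as the paper: the paper likewise reduces the theorem to the univariate statement $\int_0^1(H_N(u)-(b(u)+g(u)))^2du=o_P(1)$ (Lemma 5.1), obtained from the same expansion of $(X_i-\bar X_N)^2$, the same ergodic-theorem-plus-Abel-summation control of $\sum_{i\le k}a^2(i/N)(e_i^2-\sigma^2)$ against the BV weights, and the same Riemann approximation. The only material difference is that the paper establishes a uniform-in-$k$ rate for the cross term $\sum_{i\le k}u_i(\mu_i-\bar\mu)$ via Menshov's inequality, whereas you get by with a pointwise variance bound combined with the uniform $O_P(1)$ envelope and a bounded-convergence passage, which is a legitimate (and slightly more elementary) substitute.
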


\medskip
We obtain from Theorem \ref{under-th} (see e.g.\ Lemma 2.2 in Horv\'ath and Kokoszka (2012) or  Dunford and Schwartz (1988)) that under $H_0$
\begin{equation}\label{ei-1}
\hat{\lambda}_{N,i}\;\stackrel{P}{\to}\;\lambda_i.
\end{equation}
It is easy to see that $C^*(t,s)$ is a symmetric, non--negative definite function. Let $\lambda_1^*\geq \lambda_2^*\geq\lambda_3^*\geq\ldots$ be the eigenvalues of $C^*$. We conclude that under  $H_A$
\begin{equation}\label{ei-2}
\hat{\lambda}_{N,i}\;\stackrel{P}{\to}\;\lambda_i^*.
\end{equation}

For any $0<\alpha<1$, we define $\hat{c}_{N,m}(\alpha)$ as
$$
\hat{c}_{N,m}(\alpha) =\min\{x: P\{ \hat{H}_{N,m} \geq x\}\leq \alpha\},
$$
where $\hat{H}_{N,m}$ is defined in \eqref{H-def}.  The empirical critical value $\hat{c}_{N,m}(\alpha)$ is asymptotically correct. It follows from Corollary \ref{cor-sq},\eqref{khl-1} and \eqref{ei-1} that under $H_0$
$$
\lim_{m\to \infty}\lim_{N\to\infty}P\left\{\int_0^1Z_N^2(t)dt\geq \hat{c}_{N,m}(\alpha)\right\}=\alpha.
$$
By \eqref{alt-st-1}, \eqref{alt-as} and \eqref{ei-2}, we conclude that under $H_A$
$$
\lim_{N\to\infty}P\left\{\int_0^1Z_N^2(t)dt\geq \hat{c}_{N,m}(\alpha)\right\}=1,\;\;\mbox{for all}\;m\geq 1,
$$
establishing the consistency of  the Cram{\'e}r--von Mises procedure.
The same arguments apply to the Anderson--Darling procedure.

\medskip
 \subsection{Estimation of the eigenvalues in case of  correlated errors}
\label{sec-est-2} ~\\

In the general case of correlated errors, the kernel  $C(t,s)$ is
estimated by
\begin{equation} \label{e:C-HC}
\widetilde{C}_N(t,s)
=\hat{g}_N(t\wedge s)-t\hat{g}_N(s)-s\hat{g}_N(t)+st\hat{g}_N(1),
\;\;0\leq s,t\leq 1,
\end{equation}
where
\[
\hat{g}_N(u)=\hat{g}_{N,\lf Nu\rf},\;0\leq u \leq 1,
\]
 and where $\hat{g}_{N,k}$ is an estimator of the long--run variance
based on the partial sample $X_1, X_2, \ldots X_k$, $ k \le N$.

In the following, we establish the asymptotic validity of the tests,
both under $H_0$ and $H_A$,  for the commonly used  kernel estimator
$\hat{g}_{N,k}$.

For any $1\leq k \leq N$ and $\ell, |\ell|< k$ we define
\begin{displaymath}
\hat{\gamma}_{N;k,\ell}=\hat{\gamma}_{k,\ell}=\left\{
\begin{array}{ll}
\displaystyle \frac{1}{N}\sum_{i=1}^{k-\ell}(X_i-\bar{X}_N)(X_{i+\ell}-\bar{X}_N),\;\;\mbox{if}\;\;0\leq \ell\leq  k-1,
\vspace{.3cm}\\
\displaystyle \frac{1}{N}\sum_{i=-\ell+1}^{k}(X_i-\bar{X}_N)(X_{i+\ell}-\bar{X}_N),\;\;\mbox{if}\;\; -(k-1)\leq \ell <0.
\end{array}
\right.
\end{displaymath}
Let
\[
\hat{g}_{N,k}=\sum_{\ell=-(k-1)}^{k-1}K(\ell/h)\hat{\gamma}_{N;k,\ell},
\]
We assume standard conditions on  the kernel $K$
and window (smoothing) parameter $h$:
\begin{assumption}\label{ass-k} (i) $K(0)=1$
(ii)  $ K(u)=K(-u)\geq 0$
(iii) $K(u)=0$ if $|u|>c$ with some $c>0$
(iv) $K$ is Lipschitz continous on the real line
\end{assumption}
and
\begin{assumption}\label{ass-h}\;\;$h=h(N)\to \infty$ and $h/N\to 0$.
\end{assumption}

The study of the the estimator $\hat{g}_N(t), 0\leq t \leq 1$, requires assumptions on the structure of the innovations $e_i, -\infty<i<\infty$. We assume that the $e_i$'s form a Bernoulli shift:

\begin{assumption}\label{as-ber} (i) $e_i=f(\vare_i, \vare_{i-1}, \ldots)$, where $f$ is a measurable functional and $\vare_i, -\infty<i<\infty$, are independent and identically distributed random variables in some measure space.\\
(ii) $Ee_0=0$ and $E|e_0|^4<\infty$. \\
(iii)
$$(E|e_{i,m}-e_i|^4)^{1/4}=O(m^{-\alpha})\;\;\mbox{ with some }\;\;\alpha>2,
$$
where $e_{i,m}=g(\vare_i, \vare_{i-1}, \ldots ,\vare_{i-m},
 \vare_{i,m, i-m-1}, \vare_{i,m, i-m-2}, \ldots)$, $\vare_{i,m,\ell}, -\infty<i,m, \ell<\infty$,
are independent and identically distributed copies of $\vare_0$.
\end{assumption}
We note that Assumption \ref{as-ber} implies that $e_i, -\infty<i<\infty$, is a stationary sequence and Assumptions \ref{as-3} and \ref{as-cor} are also satisfied (cf.\ Berkes et al.\ (2013) and Jirak (2013)).
To get the exact limit of $\widetilde{C}_N(t,s)$ under $H_A$ we need a further regularity condition:
\begin{assumption}\label{as-sm}
$$
\max_{1\leq \ell\leq ch}\frac{1}{N} \sum_{1\leq i \leq k-\ell}|\mu_{i+\ell}-{\mu}_i|=o(1),
$$
where $c$ is defined in Assumption \ref{ass-k}.
\end{assumption}
It is easy to see that Examples \ref{ex-1}--\ref{ex-3} satisfy Assumption \ref{as-sm}.

\medskip
 \begin{theorem}\label{second-th} We assume that Assumptions \ref{as-1}, \ref{as-2} and \ref{ass-k}--\ref{as-ber} are satisfied.\\
 (i) If $H_0$ holds, then
 \begin{equation}\label{lo-1}
 \int_0^1\!\!\!\int_0^1 (\widetilde{C}_N(t,s)-C(t,s))^2dtds=o_P(1).
 \end{equation}
 (ii) If $H_A$ and, in addition, Assumption \ref{mu-con} hold, then
 \begin{equation}\label{lo-1/2}
 \int_0^1\!\!\!\int_0^1 \widetilde{C}_N^2(t,s)dtds=O_P(h^2),
 \end{equation}
 (iii) If $H_A$ and, in addition, Assumptions \ref{mu-con} and \ref{as-sm} hold, then
 \begin{equation}\label{lo-2}
 \int_0^1\!\!\!\int_0^1 \left(\frac{1}{h}\widetilde{C}_N(t,s)-C^{**}(t,s)\right)^2dtds=o_P(1),
 \end{equation}
 where
 $$
 C^{**}(t,s)=\left( g(t\wedge s)-tg(s)-sg(t)+tsg(1)   \right)\int_{-c}^c K(u)du.
 $$
 \end{theorem}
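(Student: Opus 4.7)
The plan is to reduce Theorem~\ref{second-th} to an $L^2[0,1]$ statement about the scalar function $\hat g_N(u)$. Since $\widetilde C_N(t,s)$ is a fixed bilinear combination of $\hat g_N(t\wedge s)$, $\hat g_N(t)$, $\hat g_N(s)$, $\hat g_N(1)$ with coefficients bounded by $1$, the Cauchy--Schwarz inequality gives, for any target $m$ and its associated kernel $M$,
\begin{align*}
\int_0^1\!\!\int_0^1 \bigl(\widetilde C_N(t,s)-M(t,s)\bigr)^2 dt\,ds
\;\le\; c_1 \int_0^1 \bigl(\hat g_N(u)-m(u)\bigr)^2 du + c_2\bigl(\hat g_N(1)-m(1)\bigr)^2.
\end{align*}
Thus it suffices to identify, in each regime, the correct limit of $\hat g_N(u)$ and establish $L^2$ convergence.

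First I would split each lagged covariance estimator via $X_i-\bar X_N = (\mu_i-\bar\mu)+(u_i-\bar u_N)$ as
\begin{align*}
\hat\gamma_{N;k,\ell}=A_{k,\ell}+B_{k,\ell}+R_{k,\ell},
\end{align*}
where $A$ is the $\mu$--$\mu$ term, $B$ is the $u$--$u$ term, and $R$ collects the $\mu$--$u$ cross and centering corrections. Applying Theorem~\ref{basic} at $t=1$ gives $\bar u_N=O_P(N^{-1/2})$; together with Assumption~\ref{as-ber} and the $O(h)$ number of lags, this makes $\sum_{|\ell|<h}K(\ell/h)R_{k,\ell}=o_P(1)$ uniformly in $k$. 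Under $H_0$ we also have $A\equiv 0$. For the noise term, Assumption~\ref{as-2} allows us to replace $a((i+\ell)/N)$ by $a(i/N)$ at total cost $O(h\cdot V_a/N)=o(1)$, where $V_a$ is the total variation of $a$, leaving
\begin{align*}
B_{k,\ell}\;\approx\;\frac{1}{N}\sum_{i=1}^{\lfloor Nu\rfloor-\ell} a^2(i/N)\,e_i e_{i+\ell}.
\end{align*}
By the Bernoulli--shift stationarity and ergodicity of $\{e_ie_{i+\ell}\}$, a Riemann sum argument gives the pointwise limit $\gamma_\ell\int_0^u a^2(v)dv$ where $\gamma_\ell=Ee_0e_\ell$. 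Summing the kernel weights, using $K(0)=1$, Assumption~\ref{as-cor} to make $\sum_\ell |\gamma_\ell|<\infty$, and dominated convergence gives $\hat g_N(u)\to \sigma^2\int_0^u a^2(v)dv=b(u)$ in $L^2[0,1]$ in probability, which proves \eqref{lo-1}.

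For parts (ii) and (iii) the signal term $A_{k,\ell}$ dominates. Part (ii) follows from the crude bound $|A_{k,\ell}|\le\max_i|\mu_i-\bar\mu|^2=O(1)$ via Assumption~\ref{mu-con}; then $|\hat g_N(u)|\le O(h)$ uniformly in $u$, hence $\int\!\!\int \widetilde C_N^2=O_P(h^2)$. For part (iii) I would use the telescoping identity
\begin{align*}
(\mu_i-\bar\mu)(\mu_{i+\ell}-\bar\mu) = (\mu_i-\bar\mu)^2 + (\mu_i-\bar\mu)(\mu_{i+\ell}-\mu_i),
\end{align*}
whose second term, by Assumption~\ref{as-sm}, has $\sum_i$ of order $o(N)$ uniformly for $|\ell|\le ch$. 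Consequently $A_{k,\ell}=\frac{1}{N}\sum_{i=1}^{k-\ell}(\mu_i-\bar\mu)^2+o(1)\to g(u)$ pointwise in $u$, uniformly in $|\ell|\le ch$. The Riemann sum identity $h^{-1}\sum_{\ell=-(k-1)}^{k-1}K(\ell/h)\to\int_{-c}^cK(v)dv$, valid by Assumption~\ref{ass-k}, then yields $h^{-1}\hat g_N(u)\to g(u)\int_{-c}^cK(v)dv$ in $L^2[0,1]$, which is \eqref{lo-2}.

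The hardest step is the uniform control over the $O(h)$ lags in part (i): pointwise convergence of each $B_{k,\ell}$ is routine, but turning the kernel sum into a convergent series requires a maximal fourth-moment inequality of the form $E\max_{|\ell|\le ch}(B_{k,\ell}-EB_{k,\ell})^2=o(1)$ after kernel weighting. This is precisely where the Bernoulli shift regularity with approximation rate $\alpha>2$ and $E|e_0|^4<\infty$ in Assumption~\ref{as-ber}(ii)--(iii) enters, via the moment/maximal estimates of Berkes et al.~(2013) and Jirak~(2013) already cited in the paper.
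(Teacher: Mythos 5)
Your proposal is correct and follows essentially the same route as the paper: you reduce $\widetilde{C}_N$ to the scalar estimator $\hat g_N$, split each $\hat\gamma_{N;k,\ell}$ into signal, noise, and cross/centering pieces (the paper's nine terms $r_{k,\ell,1},\dots,r_{k,\ell,9}$, grouped), identify the limits $b(u)$, the $O_P(h^2)$ bound, and $g(u)\int_{-c}^{c}K$ exactly as in Lemmas \ref{lolemnull} and \ref{lolemalt}, and use the same telescoping trick with Assumption \ref{as-sm} for part (iii). The only substantive difference is that you outsource the uniform-over-lags variance bound to the maximal inequalities of Berkes et al.\ (2013) and Jirak (2013), whereas the paper proves the needed estimate $E\frac{1}{N}\sum_k\bigl(\sum_\ell K(\ell/h)(r_{k,\ell,1}-Er_{k,\ell,1})/N\bigr)^2=O(h/N)$ from scratch via an explicit Bernoulli-shift coupling of $e_je_{j+\ell'}$ with independent copies; you correctly identify that this is the crux and which parts of Assumption \ref{as-ber} drive it.
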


 \medskip
 Let $\tilde{\lambda}_{N,1}\geq \tilde{\lambda}_{N,2}\geq \ldots$ denote the eigenvalues of $\widetilde{C}_N(t,s)$. It follows from Theorem \ref{second-th}(i), analogously  to \eqref{ei-1}, that
 \begin{equation}\label{co-ei-1}
 \tilde{\lambda}_{N,i}\;\stackrel{P}{\to}\;\lambda_i,
 \end{equation}
where $\lambda_1\geq \lambda_2\geq \ldots $ are the eigenvalues of $C(t,s)$ defined in \eqref{khl-2}. For any $0<\alpha<1$,  we now define
 $\tilde{c}_{N,m}(\alpha)$ as
$$
\tilde{c}_{N,m}(\alpha) =\min\{x: P\{ \tilde{H}_{N,m} \geq x\}\leq \alpha\},
$$
where
$$
\tilde{H}_{N,m}=\sum_{i=1}^m \tilde{\lambda}_{N,i}\xi_i^2,
\;\;\mbox{with some large enough}\;m,
$$
and $\xi_1, \xi_2, \ldots $ are independent standard normal random variables. It follows from Corollary \ref{cor-sq},\eqref{khl-1} and \eqref{co-ei-1} that under $H_0$
$$
\lim_{m\to \infty}\lim_{N\to\infty}P\left\{\int_0^1Z_N^2(t)dt\geq \tilde{c}_{N,m}(\alpha)\right\}=\alpha.
$$
However, the consistency of the procedure is more delicate, since the empirical eigenvalues $\tilde{\lambda}_{N,i}$ might not have a finite limit as $N\to \infty$. Indeed, under Assumption \ref{mu-con} we have that $\tilde{\lambda}_{N,i}/h$ converges in probability to a finite limit. Since
$$
\tilde{\lambda}_{N,i}\tilde{\varphi}_{N,i}(t)=\int_0^1\widetilde{C}_{N}(t,s)\tilde{\varphi}_{N,i}(s)ds,
$$
where the $\tilde{\varphi}_{N,i}(t)$'s are orthonormal functions, we get from \eqref{lo-1/2} via the Cauchy--Schwartz inequality that
\begin{align*}
\tilde{\lambda}_{N,i}^2=\int_0^1(\tilde{\lambda}\tilde{\varphi}_{N,i}(t))^2dt&=\int_0^1\left(\int_0^1\widetilde{C}_{N}(t,s)\tilde{\varphi}_{N,i}(s)ds\right)^2dt\\
&\leq \int_0^1\left(\int_0^1\widetilde{C}_{N,i}^2(s,t)ds \int_0^1  \tilde{\varphi}_{N,i}^2(s)ds\right)dt\\
&=\int_0^1\!\!\!\int_0^1\widetilde{C}_{N,i}^2(s,t)dsdt\\
&=O(h^2).
\end{align*}
Hence under $H_A$ we have that $\tilde{\lambda}_{N,i}=O_P(h)$, implying that for each $m$ and $0<\alpha<1$
$$
\tilde{H}_{N,m}=O_P(h)\;\;\;\mbox{and therefore}\;\;\;\tilde{c}_{N,m}(\alpha)=O_P(h).
$$
Thus Theorem \ref{alt-1} yields
$$
\lim_{N\to\infty}P\left\{\int_0^1Z_N^2(t)dt\geq \tilde{c}_{N,m}(\alpha)\right\}=1\;\;\mbox{for all}\;m\geq 1,
$$
establishing the consistency of  the Cram{\'e}r--von Mises procedure
 in case of correlated observations when Assumption \ref{ass-h} holds.
Similar  arguments apply to the Anderson--Darling procedure.

\medskip
 \section{Simulation study and application to US yields}\label{sec-sim}
The purpose of this section is to assess the finite sample performance of the
 proposed tests. After describing them
 in a systematic manner in Section~\ref{ss:tp}, we explore
 in  Section~\ref{ss:err} their  properties
 using simulated and  real data.

\medskip
\subsection{Test procedures} \label{ss:tp} ~\\

For ease of reference, we begin by listing the test procedures
introduced in this paper and in related work together with convenient
abbreviations.  We also provide their brief descriptions.  The
procedures are based on the following ingredients, which also form
the building blocks of the abbreviations.

{\bf S}tandard vs. {\bf H}eteroskedastic. In the standard approach
we assume that $a(t) =1$, i.e. we do not take the
possible heteroskedasticity of the errors into account.
In the heteroskedastic approach, the function $a(\cdot)$ is estimated
as explained in the previous sections.

{\bf U}ncorrelated vs. {\bf C}orrelated. In the uncorrelated case,
we estimate the eigenvalues as described in Section~\ref{sec-est-1},
i.e. we assume that the observations  are uncorrelated. In the correlated
case, we estimate the eigenvalues as described  in Section~\ref{sec-est-2},
i.e. we assume that the observations  are correlated.

{\bf CM} vs. {\bf AD}. This refers to using either the Cram{\'e}r--von Mises
or the Anderson--Darling functional.

We also consider two methods studied  by  Dalla et al.\ (2015), which
they  denote  {\bf VS}$^*$  and {\bf VS}, but which
we denote {\bf VSU} and {\bf VSC} to emphasize more
clearly that {\bf VSU} assumes uncorrelated errors, while
{\bf VSC} assumes correlated errors. Dalla et al.\ (2015)  also
considered analogous methods based on the KPSS statistic.
 They found that they were not competitive with the VS methods,
so we do not include the KPSS methods in our comparison.

We now list the methods we  study.

{\bf SUCM}
(Standard Uncorrelated errors Cram{\'e}r--von Mises.)
Set
\begin{equation} \label{e:CM}
\widehat T_N = \int_0^1 Z_N^2(t) dt
\end{equation}
and denote by $\hat\sigma^2$ is the sample variance
of the observations $X_i$. Then
\begin{equation} \label{e:CML}
\frac{\widehat T_N}{\hat\sigma^2}
\;\;\stackrel{{\mathcal D}}{\to}\;\;\int_0^1 B^2(t) dt,
\end{equation}
where $B(\cdot)$ is the standard Brownian motion. The distribution
of the right--hand side of \eqref{e:CML} is known,
Shorack and Wellner (1986).

{\bf SCCM} The only difference between this method and SUCM
is that in \eqref{e:CML},  $\hat\sigma^2$ is a consistent estimator of
the long--run variance of the $X_i$.

{\bf HUCM}
(Heteroskedastic Uncorrelated errors Cram{\'e}r--von Mises.)
The test statistic is $\widehat T_N$ defined by \eqref{e:CM}.
Its limit distribution is approximated by the empirical distribution
of the random variable
\[
T(m) = \sum_{i=1}^m \hat\lambda_i \xi_i^2.
\]
The $\xi_j$ are independent standard normal. The $\hat\lambda_i$
satisfy
\[
\hat\lambda_i \hat\varphi_i(t) = \int_0^1 \widehat C (t,s) \hat\varphi_i(s) ds,
\]
where $\widehat{C}$
is given by \eqref{e:C-HU}.
The P--value
is computed as
\[
P= \frac{1}{R} \#\left\{ r: T(m)_r> \widehat T_N \right\},
\]
where $T(m)_r, r=1,2, \ldots, R, $ are independent replications of $T(m)$.

{\bf HCCM} (Heteroskedastic Correlated errors Cram{\'e}r--von Mises.)
Conceptually, the only difference between this method and SUCM is that
$\widehat{C}$ is replaced by $\widetilde{C}$ given by \eqref{e:C-HC}.
We note that $\hat{g}_{N,k}$ is the estimator of the long--run
variance of $X_1, X_2, \ldots, X_k$, and any suitable estimator can be
used. To enhance comparison,
we used the  spectral estimator employed by  Dalla et
al.\ (2015). \footnote{We thank Dr. V. Dalla for making the code available.}

{\bf SUAD}
(Standard Uncorrelated errors Anderson--Darling.)
Set
\begin{equation} \label{e:AD}
\widehat U_N = \int_{1/N}^{1-1/N}\frac{Z_N^2(t)}{t(1-t)} dt
\end{equation}
and denote by $\hat\sigma_N^2$ the sample variance of
the $X_i$. Then
\begin{equation} \label{e:ADL}
\frac{\widehat U_N}{\hat\sigma_N^2}
\;\;\stackrel{{\mathcal D}}{\to}\;\;
\int_0^1 \frac{B^2(t)}{t(1-t)} dt,
\end{equation}
where $B(\cdot)$ is the standard Brownian motion. The distribution
of the right--hand side of \eqref{e:CML} is known,
Shorack and Wellner (1986).

{\bf SCAD} The only difference between this method and SUAD
is that in \eqref{e:ADL},  $\hat\sigma^2$ is a consistent estimator of
the long--run variance of the $X_i$.

{\bf HUAD}
(Heteroskedastic Uncorrelated errors Anderson--Darling.)
The test statistic is $\widehat U_N$ defined by \eqref{e:AD}.
Its limit distribution is approximated by the empirical distribution
of the random variable
\[
U(m) = \sum_{i=1}^m \hat\tau_i \xi_i^2.
\]
The $\xi_j$ are independent standard normal. The $\hat\tau_i$
satisfy
\[
\hat\tau_i \hat\psi_i(t) = \int_{1/N}^{1-1/N}
 \widehat{D}(t,s) \hat\psi_i(s) ds,
\]
where
\[
\widehat D (t,s) = \frac{\widehat{C}(t,s)}{\sqrt{t(1-t)(s(1-s)}},
\]
with $\widehat{C}$ given by \eqref{e:C-HU}. The P--value is computed as
\[
P= \frac{1}{R} \#\left\{ r: U(m)_r> \widehat U_N \right\},
\]
where $U(m)_r, r=1,2, \ldots, R, $ are independent replications of $U(m)$.

{\bf HCAD} (Heteroskedastic Correlated errors Anderson--Darling.)
The only difference between this method and HUAD is that $\widehat{C}$
is replaced by $\widetilde{C}$ given by \eqref{e:C-HC}.
The specific implementation is the same as for the HCCM method.

{\bf VSU} (VS statistic Uncorrelated errors) The test statistic
is
\[
\widehat{V}_N^* = \frac{1}{\hat\sigma^2 N^2}
\sum_{k=1}^N \left ( S_k^\prime - \bar{S^\prime} \right )^2, \ \ \
S_k^\prime = \sum_{i=1}^k (X_i - \bar X),
\]
where $\hat\sigma^2$ is the sample variance of the observations
$X_i, 1 \le i \le N$.
Its  null distribution is approximated by the distribution
of the random variable
\[
V(m) = \sum_{k=1}^m \frac{\chi_k^2(2)}{4\pi^2 k^2},
\]
where  the $\chi_k^2(2)$ are iid chi-square with 2 degrees of freedom.
If $\widehat V_N^*$ is the observed value of the statistic,
then the P--value is computed as
\[
P= \frac{1}{R} \#\left\{ r: V(m)_r> \widehat V_N^* \right\},
\]
where $V(m)_r, r=1,2, \ldots, R, $ are independent replications of $V(m)$.

{\bf VSC} (VS statistic Correlated errors) The only difference between
this method and {\bf VSU} is that in the definition of the test
statistics, say  $\widehat V_N$, $\hat\sigma^2$ is replaced by a consistent
estimator of the long--run variance of the observations $X_i, 1 \le i
\le N$, i.e. by $\hat g_{N,N}$ in the notation of Section~\ref{sec-est-2}.

We emphasize that, in contrast to the  H--methods introduced in
this paper, the common asymptotic distribution of the statistics
$\widehat{V}_N^*$ and $\widehat V_N$ does not depend on the data.
These statistics do not directly take possible heteroskedasticity into
account; their applicability is based on  empirically and
theoretically established relative  insensitivity to
heteroskedastic  errors.

\medskip
\subsection{Empirical rejection rates and application to US yields}
\label{ss:err}~\\

We analyzed the size and power of the tests for all models
considered by Dalla et al.\ (2015). Regarding the empirical
size, our HU tests have similar size as the VSU test;
the HC tests have size similar to the VSC test. Generally, the
differences in empirical size within these two categories of tests
are within the standard error of the rejection rates. For illustration,
Table~\ref{tb:size2} provides selected results in case of correlated
and heteroskedastic errors, the most general case. With  prior knowledge
that the errors are uncorrelated, the U--methods can be expected
to have correct size only if the observations have the form $X_i = a_i R_i$.
The $R_i$ are realizations of a GARCH process. Without any prior knowledge
about correlation and heteroskedasticity of the errors, only methods
HCCM, HCAD and VSC should be applied. In most cases, there is
no clear advantage of any of these methods over the other. If the
observations have heavy tails, the case of $X_i= a_i(R_i^2 - E R_0^2)$
with $\beta=0.7$, the VSC method overrejects, the empirical size is
over 8\% at the nominal size of 5\%. Generally, our HC methods
tend to have size slightly smaller than the nominal size, the VSC method
a somewhat larger size.

\begin{table}[!ht]
\tiny
\centering
\begin{tabular}{lcccccccccccc}
\toprule
&$\mathbf{N=128}$ & SUCM & SCCM & HUCM & HCCM & SUAD & SCAD & HUAD & HCAD & VSU & VSC\\
\midrule
$X_i = a_{i1}R_i$ & \multirow{8}{*}{$\beta = 0.5$}  & 7.1 & 6.9 & 5.1 & 3.2 & 7.5 & 10.1 & 4.6 & 3.3 & 6.6 & 6.8 \\
$X_i = a_{i1}(|R_i| - E|R_0|)$ & & 24.9 & 6.8 & 20.7 & 4.7 & 26.4 & 7.6 & 19.8 & 4.2 & 27.7 & 3.8\\
$X_i = a_{i1}(R_i^2 - ER_0^2)$ & & 28.2 & 8.9 & 29.2 & 5.8 & 28.5 & 9.4 & 29.3 & 5.3 & 34.0 & 4.5\\
\cmidrule{1-1} \cmidrule{3-12}
$X_i = a_{i2}R_i$ & & 4.5 & 5.1 & 4.6 & 4.2 & 5.2 & 8.1 & 4.9 & 4.0 & 5.2 & 5.8 \\
$X_i = a_{i2}(|R_i| - E|R_0|)$ & & 20.5 & 7.6 & 20.0 & 4.7 & 21.1 & 5.6 & 20.8 & 4.7 & 29.0 & 5.3 \\
$X_i = a_{i2}(R_i^2 - ER_0^2)$ & & 25.1 & 7.9 & 24.6 & 4.3 & 23.8 & 6.4 & 24.4 & 4.6 & 36.2 & 5.8\\
\midrule
$X_i = a_{i1}R_i$ & \multirow{8}{*}{$\beta = 0.7$} & 7.7 & 7.1 & 5.2 & 4.1 & 8.5 & 10.4 & 4.6 & 4.0 & 6.8 & 5.7\\
$X_i = a_{i1}(|R_i| - E|R_0|)$ & & 49.9 & 10.4 & 43.5 & 5.3 & 52.2 & 9.0 & 43.2 & 4.2 & 58.6 & 5.4\\
$X_i = a_{i1}(R_i^2 - ER_0^2)$ & & 57.2 & 13.7 & 48.3 & 7.8 & 58.7 & 13.3 & 47.5 & 6.1 & 62.8 & 8.0\\
\cmidrule{1-1} \cmidrule{3-12}
$X_i = a_{i2}R_i$ & & 5.9 & 6.1 & 3.9 & 4.3 & 5.8 & 6.9 & 3.9 & 4.1 & 5.1 & 4.6 \\
$X_i = a_{i2}(|R_i| - E|R_0|)$ & & 46.5 & 9.7 & 45.6 & 5.0 & 47.3 & 8.5 & 45.6 & 4.5 & 58.0 & 7.3\\
$X_i = a_{i2}(R_i^2 - ER_0^2)$ & & 49.6 & 10.9 & 47.8 & 4.8 & 49.3 & 10.0 & 46.9 & 4.8 & 61.3 & 8.9\\
\bottomrule
\toprule
&$\mathbf{N=512}$ & SUCM & SCCM & HUCM & HCCM & SUAD & SCAD & HUAD & HCAD & VSU & VSC\\
\midrule
$X_i = a_{i1}R_i$ & \multirow{8}{*}{$\beta = 0.5$}  & 6.8 & 7.6 & 3.9 & 5.1 & 8.0 & 7.5 & 4.1 & 5.4 & 5.9 & 5.0\\
$X_i = a_{i1}(|R_i| - E|R_0|)$ & & 26.7 & 9.5 & 23.7 & 6.3 & 35.3 & 9.4 & 25.6 & 6.4 & 35.4 & 6.9 \\
$X_i = a_{i1}(R_i^2 - ER_0^2)$ & & 32.4 & 10.4 & 25.3 & 4.1 & 37.5 & 12.3 & 26.6 & 4.3 & 39.4 & 6.0\\
\cmidrule{1-1} \cmidrule{3-12}
$X_i = a_{i2}R_i$ & & 5.9 & 6.2 & 4.9 & 4.6 & 4.9 & 5.9 & 5.4 & 5.0 & 4.7 & 5.3\\
$X_i = a_{i2}(|R_i| - E|R_0|)$ & & 26.3 & 6.0 & 23.4 & 4.6 & 29.2 & 7.4 & 26.1 & 5.4 & 37.1 & 7.1\\
$X_i = a_{i2}(R_i^2 - ER_0^2)$ & & 28.7 & 6.7 & 29.3 & 5.3 & 32.4 & 7.2 & 31.1 & 5.6 & 43.5 & 6.7\\
\midrule
$X_i = a_{i1}R_i$ & \multirow{8}{*}{$\beta = 0.7$} & 6.1 & 5.5 & 5.2 & 4.2 & 8.2 & 7.9 & 5.1 & 5.4 & 6.5 & 5.4\\
$X_i = a_{i1}(|R_i| - E|R_0|)$ & & 61.6 & 9.9 & 52.8 & 6.1 & 67.7 & 9.9 & 56.2 & 4.5 & 75.9 & 5.7\\
$X_i = a_{i1}(R_i^2 - ER_0^2)$ & & 65.4 & 14.6 & 60.3 & 6.9 & 71.1 & 15.6 & 63.4 & 5.9 & 78.6 & 8.3\\
\cmidrule{1-1} \cmidrule{3-12}
$X_i = a_{i2}R_i$ & & 6.5 & 5.9 & 4.9 & 5.2 & 6.7 & 5.9 & 5.2 & 5.4 & 5.6 & 5.5\\
$X_i = a_{i2}(|R_i| - E|R_0|)$ & & 59.2 & 8.5 & 55.9 & 4.4 & 63.7 & 7.6 & 62.1 & 4.5 & 75.0 & 5.7\\
$X_i = a_{i2}(R_i^2 - ER_0^2)$ & & 64.3 & 9.4 & 60.7 & 5.2 & 68.8 & 8.9 & 65.1 & 5.7 & 80.0 & 8.6\\
\bottomrule
\end{tabular}
\medskip
\caption{Empirical sizes under nonlinear dependence (at 5\% nominal level). $a_{i1}=i/ 2N$; $a_{i2}=0.25\mathbf{I}\{0\le i\le 0.5N\}+0.5\mathbf{I}\{0.5N< i\le N\}$;
The $R_i$ are GARCH(1,1) processes with $\omega = 10^{-6},  \alpha = 0.2, \beta = 0.5$, alternatively $\beta= 0.7$. }
\label{tb:size2}
\end{table}

\begin{table}[!ht]
\tiny
\centering
\begin{tabular}{lcccccc}
\toprule
& SUCM & SCCM & HUCM & HCCM & VSU & VSC\\
\midrule
\multicolumn{7}{c}{$\mathbf{N=128}$}\\
\midrule
$X_i = \mu_{i}+\sin(\pi i/N)Y_i$; \  $Y_i = \ar(1), \ \rho = 0.5$ & 88.1 & 28.9 & 95.0 & 38.8 & 84.2 & 17.4\\
$X_i = \mu_{i}+\sin(\pi i/N)Y_i$; \ $Y_i\sim N(0,1)$ & 98.2 & 31.6 & 100.0 & 34.6 & 89.3  & 17.2\\
$X_i = \mu_{i}+\sin(\pi i/N)Y_i$; \ $Y_i = \garch(1,1)\ \alpha=0.2,\ \beta = 0.5$ & 100.0 & 3.0 & 100.0 & 88.8 & 100.0  & 0.0\\
$X_i = \mu_{i}+\sin(\pi i/N)Y_i$; \ $Y_i = \garch(1,1)\ \alpha=0.2,\ \beta = 0.7$ & 100.0 & 2.3 & 100.0 & 71.3 & 100.0  & 0.1\\
$X_i = \mu_{i}+a_{i1}Y_i$; \  $Y_i = \ar(1), \ \rho = 0.5$ & 100.0 & 60.7 & 100.0 & 53.0 & 100.0 & 22.1\\
$X_i = \mu_{i}+a_{i1}Y_i$; \ $Y_i\sim N(0,1)$ & 100.0 & 11.5 & 100.0 & 60.7 & 100.0 & 0.0 \\
$X_i = \mu_{i}+a_{i1}Y_i$; \ $Y_i = \garch(1,1)\ \alpha=0.2,\ \beta = 0.5$ & 100.0 & 0.0 & 100.0 & 99.9 & 100.0 & 0.0\\
$X_i = \mu_{i}+a_{i1}Y_i$; \ $Y_i = \garch(1,1)\ \alpha=0.2,\ \beta = 0.7$ & 100.0 & 0.0 & 100.0 & 100.0 & 100.0 & 0.0 \\
$X_i = \mu_{i}+a_{i2}Y_i$; \  $Y_i = \ar(1), \ \rho = 0.5$ & 98.9 & 65.7 & 98.8 & 48.4 & 99.1 & 27.7\\
$X_i = \mu_{i}+a_{i2}Y_i$; \ $Y_i\sim N(0,1)$ & 100.0 & 16.6 & 100.0 & 37.6 & 100.0 & 1.2\\
$X_i = \mu_{i}+a_{i2}Y_i$; \ $Y_i = \garch(1,1)\ \alpha=0.2,\ \beta = 0.5$ & 100.0 & 0.0 & 100.0 & 99.9 & 100.0 & 0.0\\
$X_i = \mu_{i}+a_{i2}Y_i$; \ $Y_i = \garch(1,1)\ \alpha=0.2,\ \beta = 0.7$ & 100.0 & 0.0 & 100.0 & 100.0 & 100.0 & 0.0\\
$X_i = \mu_{i} + a_{i3}Y_i$; \  $Y_i = \ar(1), \ \rho = 0.5$ & 41.1 & 8.5 & 40.0 & 7.4 & 54.2 & 6.3\\
$X_i = \mu_{i} + a_{i3}Y_i$; \ $Y_i\sim N(0,1)$ & 14.8 & 13.9 & 13.3 & 8.3 & 12.1 & 9.7 \\
$X_i = \mu_{i} + a_{i4}Y_i$; \  $Y_i = \ar(1), \ \rho = 0.5$ & 80.8 & 36.3 & 80.6 & 7.0 & 84.2 & 19.0\\
$X_i = \mu_{i} + a_{i4}Y_i$; \ $Y_i\sim N(0,1)$ & 94.3 & 61.1 & 94.0 & 6.1 & 88.2 & 41.8 \\
\midrule
\multicolumn{7}{c}{$\mathbf{N=512}$}\\
\midrule
$X_i = \mu_{i}+\sin(\pi i/N)Y_i$; \  $Y_i = \ar(1), \ \rho = 0.5$ & 100.0 & 98.7 & 100.0 & 99.8 & 100.0 & 90.2\\
$X_i = \mu_{i}+\sin(\pi i/N)Y_i$; \ $Y_i\sim N(0,1)$ & 100.0 & 100.0 & 100.0 & 100.0 & 100.0 & 98.2\\
$X_i = \mu_{i}+\sin(\pi i/N)Y_i$; \ $Y_i = \garch(1,1)\ \alpha=0.2,\ \beta = 0.5$ & 100.0 & 100.0 & 100.0 & 100.0 & 100.0  & 100.0\\
$X_i = \mu_{i}+\sin(\pi i/N)Y_i$; \ $Y_i = \garch(1,1)\ \alpha=0.2,\ \beta = 0.7$ & 100.0 & 100.0 & 100.0 & 100.0 & 100.0  & 100.0\\
$X_i = \mu_{i}+a_{i1}Y_i$; \  $Y_i = \ar(1), \ \rho = 0.5$ & 100.0 & 100.0 & 100.0 & 100.0 & 100.0 & 99.9\\
$X_i = \mu_{i}+a_{i1}Y_i$; \ $Y_i\sim N(0,1)$ & 100.0 & 100.0 & 100.0 & 100.0 & 100.0 & 100.0 \\
$X_i = \mu_{i}+a_{i1}Y_i$; \ $Y_i = \garch(1,1)\ \alpha=0.2,\ \beta = 0.5$ & 100.0 & 100.0 & 100.0 & 100.0 & 100.0 & 100.0\\
$X_i = \mu_{i}+a_{i1}Y_i$; \ $Y_i = \garch(1,1)\ \alpha=0.2,\ \beta = 0.7$ & 100.0 & 100.0 & 100.0 & 100.0 & 100.0 & 100.0\\
$X_i = \mu_{i}+a_{i2}Y_i$; \  $Y_i = \ar(1), \ \rho = 0.5$ & 100.0 & 100.0 & 100.0 & 100.0 & 100.0 & 99.7\\
$X_i = \mu_{i}+a_{i2}Y_i$; \ $Y_i\sim N(0,1)$ & 100.0 & 100.0 & 100.0 & 100.0 & 100.0 & 100.0\\
$X_i = \mu_{i}+a_{i2}Y_i$; \ $Y_i = \garch(1,1)\ \alpha=0.2,\ \beta = 0.5$ & 100.0 & 100.0 & 100.0 & 100.0 & 100.0 & 100.0\\
$X_i = \mu_{i}+a_{i2}Y_i$; \ $Y_i = \garch(1,1)\ \alpha=0.2,\ \beta = 0.7$ & 100.0 & 100.0 & 100.0 & 100.0 & 100.0 & 100.0\\
$X_i = \mu_{i} + a_{i3}Y_i$; \  $Y_i = \ar(1), \ \rho = 0.5$ & 53.5 & 16.4 & 51.7 & 65.6 & 64.7 & 13.2\\
$X_i = \mu_{i} + a_{i3}Y_i$; \ $Y_i\sim N(0,1)$ & 38.9 & 38.2 & 37.3 & 80.2 & 32.9 & 31.1 \\
$X_i = \mu_{i} + a_{i4}Y_i$; \  $Y_i = \ar(1), \ \rho = 0.5$ & 99.9 & 92.6 & 99.8 & 39.6 & 99.6 & 86.1\\
$X_i = \mu_{i} + a_{i4}Y_i$; \ $Y_i\sim N(0,1)$ & 100.0 & 99.9 & 100.0 & 93.1 & 100.0 & 98.6 \\
\bottomrule
\end{tabular}
\medskip
\caption{Empirical power under nonlinear dependence (at 5\% nominal level). $\mu_{i} = 0.5\mathbf{I}\{i>0.5N\}$, $a_{i1} = i/ 2N$; $a_{i2} = 0.25\mathbf{I}\{0\le i\le 0.5N\}+0.5\mathbf{I}\{0.5N< i\le N\}$, $a_{i3} = 1 + 3\mathbf{I}\{i > 0.5N\}$, $a_{i4} = 1 - 0.75\mathbf{I}\{i > 0.5N\}$}
\label{tb:power}
\end{table}

Despite the oversized rejection rates under $H_0$, the VSC method
often  has smaller power the our HC methods. This illustrated in
Table~\ref{tb:power}. Only the CM methods are included, the
rejection rates for the AD methods are very similar. It is however possible
to find cases in which the VSC method dominates our HC methods.
In our simulations, this happens if the variance or the errors drops a lot.
In the cases of $a_{i4}$ the variance of the errors drops from
1 to 1/16  in the second half of the sample. The HC methods apparently
``keep'' the larger estimates of the variances based on initial realizations
$X_1, X_2, \ldots X_k$. These much  larger initial variances suppress the
values of the  HC test  statistics,  resulting in smaller power.

To shed more light on the power behavior of these tests, we apply them
to time series of yieds on US treasury bills,  which are shown in
Figure~\ref{fig:1}. There is an obvious drop in the yields, $H_A$ is
true, which
is accompanied by a drop in variance. Such a scenario might appear to
favor the VSC method. However, as Table~\ref{tb:y-1} shows, it
does not detect a fairly obvious change. Our methods detect
the change in 3 month yields with statistical significance, and a change
in 12 month yields with borderline significance (P--values about 5 percent).

\begin{figure}[!hbtp]
 \centering
\fboxsep=0pt
\noindent%
\includegraphics[height= 2.75 in]{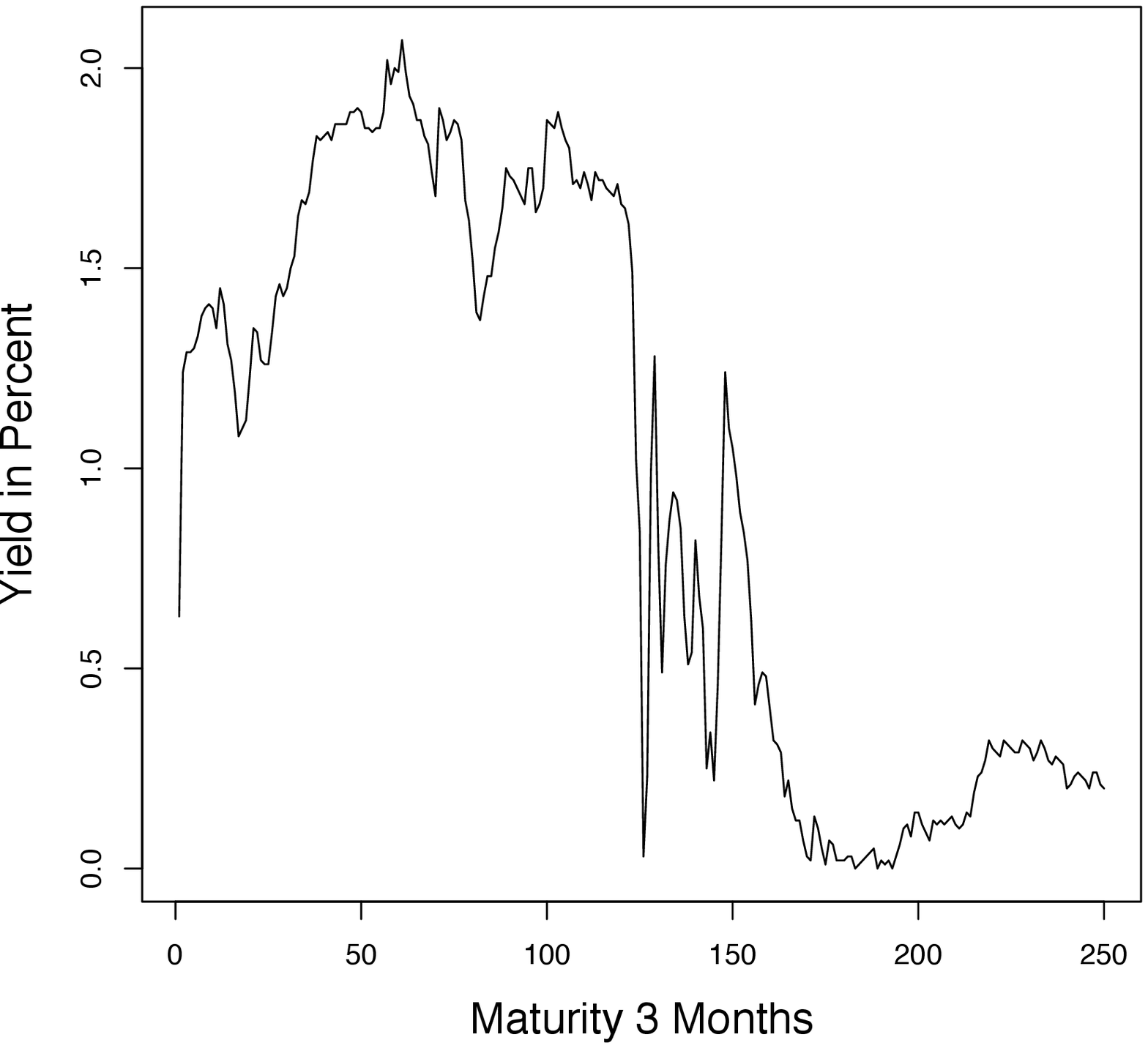}
\includegraphics[height= 2.75 in]{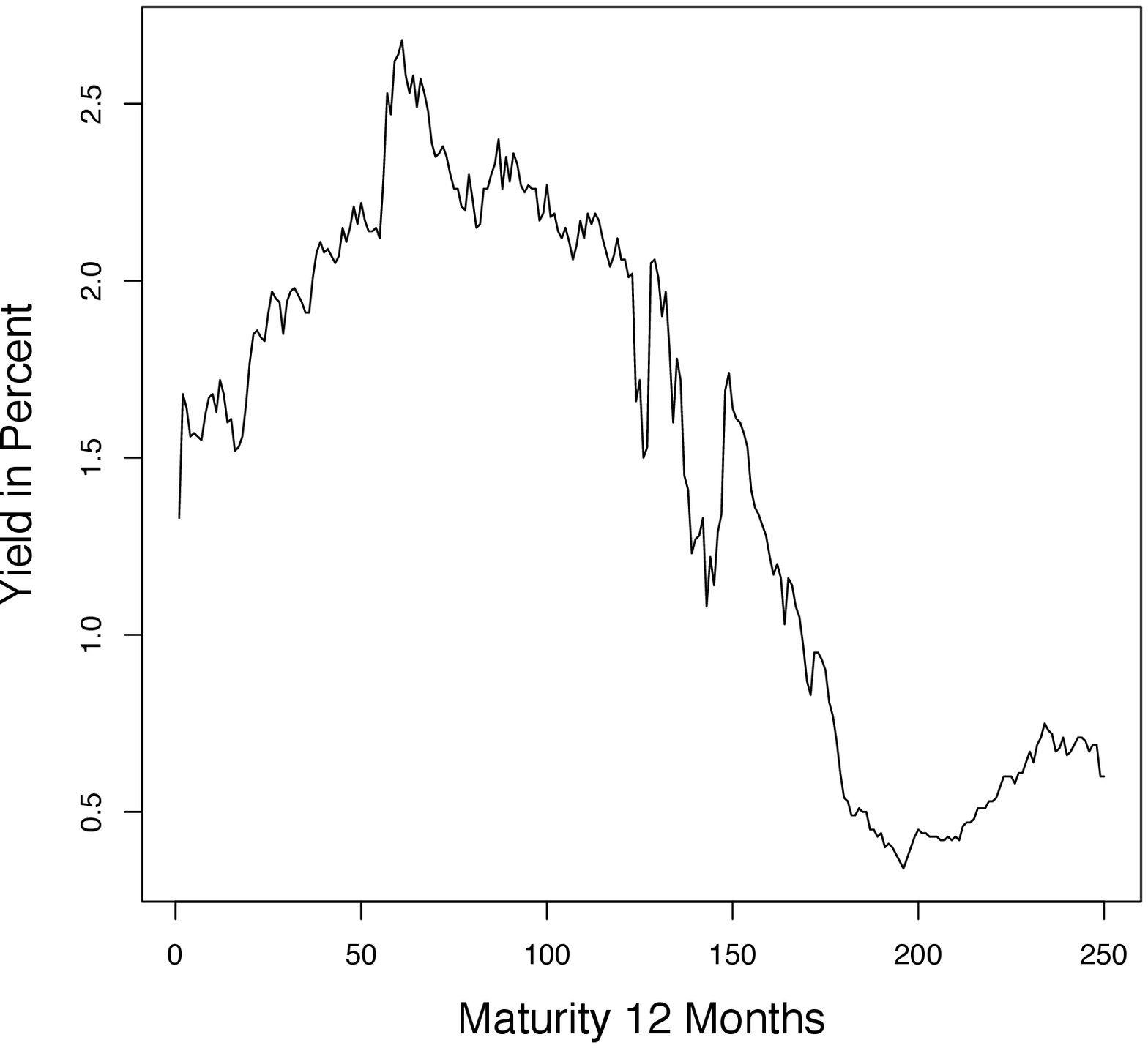}
\caption{Yields over a one year period
on US treasury bills with  maturities of 3 and 12 months.}
\label{fig:1}
\end{figure}

\begin{table}[htb]
\footnotesize
\begin{center}
\begin{tabular}{lcccccc}
\toprule
& HCCM & HCAD & VSC\\
\midrule
3 Month & 0.0243 & 0.0347 & 0.1072\\
12 Month& 0.0545 & 0.0503 & 0.1073\\
\bottomrule
\end{tabular}
\end{center}
\medskip
\caption{P--values for time series of yields in Figure~\ref{fig:1}.}
\label{tb:y-1}
\end{table}

\clearpage

\section{Proofs of the asymptotic results} \label{sec-basicpr}

\subsection{Proofs of the results of Section~\ref{s:alr}} ~\\

{\it Proof of Theorem \ref{basic}.}
Let $S(k)=\sum_{\ell=1}^k e_i$ and $S(0)=0$. By the Abel's summation formula we have
 $$
 \sum_{\ell=1}^k u_\ell=\sum_{\ell=1}^ka(\ell/N)e_\ell=a(k/N)S(k)-\sum_{\ell=1}^{k-1}S(\ell)(a((\ell+1)/N)-a(\ell/N)),\;\;1\leq k \leq N.
 $$
 Since under Assumption \ref{as-3},
 $$
 N^{-1/2}S(\lf Nt\rf)\;\;\stackrel{{\mathcal D}[0,1]}{\longrightarrow}\;\;\sigma W(t),
 $$
 by the Skorohod--Dudley--Wichura representation (cf.\ Shorack and Wellner (1986, p.\ 47)) we can define Wiener processes $W_N(t)$ such that
 $$
 \max_{1\leq k \leq N}\left|S(k)-\sigma W_N(k)\right|=o_P(N^{1/2}).
 $$
Hence, by Assumption \ref{as-2},
\begin{align*}
\max_{1\leq k \leq N}&\left|\sum_{\ell=1}^{k}u_\ell-\sigma\left(a(k/N) W_N(k)-\sum_{\ell=1}^{k}W_N(\ell)(a((\ell+1)/N)-a(\ell/N))  \right)\right|\\
&\leq \max_{1\leq k\leq N}a(k/N)\left|S(k)-\sigma W_N(k)\right|\\
&\hspace{1cm}+\max_{1\leq k\leq N}\left|\sum_{\ell=1}^{k-1} (S(\ell)-\sigma W_N(\ell))(a((\ell+1)/N)-a(\ell/N))\right|\\
&=o_P(N^{1/2})\sup_{0\leq t \leq 1}a(t)+o_P(N^{1/2})\sum_{\ell=1}^{N-1}|a((\ell+1)/N)-a(\ell/N)|\\
&=o_P(N^{1/2}).
\end{align*}
By the Jordan decomposition theorem (cf.\ Hewitt and Stromberg (1969, p.\ 266)), there are two nondecreasing functions such that $a(x)=a_1(x)-a_2(x).$ Focusing on the function $a_1$,  we can write
\begin{align*}
\sum_{\ell=1}^{k-1}&W_N(\ell)(a_1((\ell+1)/N)-a_1(\ell/N))\\
&=\sum_{\ell=1}^{k-1}W_N(\ell)\int_\ell^{\ell+1}da_1( x /N )\\
&=\int_0^k W_N(x)da_1( x/N) +\sum_{\ell=1}^{k-1}\int_\ell^{\ell+1}(W_N(\ell)-W_N(x))da_1( x/N)
\end{align*}

By the modulus of continuity of the Wiener process (cf.\ Lemma 1.2.1 of Cs\"org\H{o} and R\'ev\'esz (1981, p.\ 29)) we have that
$$
\sup_{0\leq u\leq N}\sup_{0\leq x \leq 1}|W_N(u)-W_N(u+x)|=O_P((\log N)^{1/2}).
$$
Integration by parts gives
$$
W_N(k)a_1(k/N)-\int_0^k W_N(x)da_1( x /N)=\int_0^k a_1(x/N)dW_N(x)
$$
and therefore
\begin{align*}
&\left|a_1(k/N) W_N(k)-\sum_{\ell=1}^{k}W_N(\ell)(a_1((\ell+1)/N)-a_1(\ell/N)) - \int_0^k a_1(x/N)dW_N(x) \right|\\
&\hspace{3cm}=O_P((\log N)^{1/2}).
\end{align*}
Similarly,
\begin{align*}
&\left|a_2(k/N) W_N(k)-\sum_{\ell=1}^{k}W_N(\ell)(a_2((\ell+1)/N)-a_2(\ell/N)) - \int_0^k a_2(x/N)dW_N(x) \right|\\
&\hspace{3cm}=O_P((\log N)^{1/2}),
\end{align*}
resulting in
\begin{align*}
\max_{1\leq k\leq N}\left|\sum_{\ell=1}^ku_\ell-\sigma \int_0^k a(x/N)dW_N(x)\right|=o_P(N^{1/2}).
\end{align*}
Let
$$
U_N(t)=\int_0^t a(x/N)dW_N(x), 0\leq t \leq N.
$$
It is easy to see that
$$
\left\{U_N(t), 0\leq t \leq N\right\}\stackrel{{\mathcal D}}{=}\left\{ W\left(\int_0^t a^2(x/N)dx\right), \quad 0\leq t \leq N\right\},
$$
where $W(\cdot)$ is a Wiener process. Next we note that
\begin{align*}
\max_{0\leq k\leq N-1}\sup_{0\leq v\leq 1}\left|W\left(\int_0^k a^2(x/N)dx\right)-W\left(\int_0^{k+v} a^2(x/N)dx\right)\right|=O_P((\log N)^{1/2}),
\end{align*}
since
$$
\max_{0\leq k\leq N-1}\sup_{0\leq v\leq 1}\int_k^{k+v} a^2(x/N)dx\leq 4(a_1^2(1)+a_2^2(1)).
$$
Thus,
we conclude that
$$
\sup_{0\leq t \leq 1}\left|N^{-1/2}\sum_{\ell=1}^{\lf Nt\rf}u_{\ell}-\frac{\sigma}{N^{1/2}} \int_0^{Nt}a(x/N)dW_N(x)\right|=o_P(1).
$$
Since
$$
\int_0^{Nt}a(x/N)dW_N(x)=\int_0^t a(z)dW_N(Nz),
$$
by the scale transformation of the Wiener process we get that
$$
\left\{ N^{-1/2}\int_0^{Nt}a(x/N)dW_N(x), 0\leq t \leq 1\right\} \stackrel{{\mathcal D}}{=}\left\{ \int_0^t a(x)dW(x), 0\leq t \leq 1\right\}.
$$
Computing the covariance function, one can easily verify that
$$
\biggl\{\sigma \int_0^t a(x)dW(x), 0\leq t \leq 1\biggl\}\stackrel{{\mathcal D}}{=}\biggl\{W(b(t)),  0\leq t \leq 1\biggl\},
$$
completing  the the proof of Lemma \ref{basic}. \qed

\medskip
 \noindent
 {\it Proof of Corollary \ref{basic-cor}.} It follows immediately from Theorem \ref{basic}. \qed

 \medskip
 \noindent
 {\it Proof of Corollary \ref{cor-sq}.} The convergence in distribution in \eqref{cv} is an immediate convergence of the continuous mapping theorem and Corollary \ref{basic-cor} (cf.\ Billingsley (1968)).\\
 The result in Corollary \ref{basic-cor} can be restated by an
 application of the Skorohod--Dudley--Wichura representation (cf.\
 Shorack and Wellner (1986, p.\ 47)) that there are Gaussian processes
 $\Gamma_N(t)$ distributed as $\Gamma(t)$ for each $N$ such that
 \begin{equation}\label{ga-co} \sup_{0\leq t \leq 1}|Z_N(t)-\Gamma_N(t)|=o_P(1).
 \end{equation} Let $0<\delta<1/2$. We write by the Cauchy--Schwartz inequality
 that
 \begin{align*}
 &\left| \int_{1/N}^{1-1/N}\frac{Z_N^2(t)}{t(1-t)}dt- \int_{1/N}^{1-1/N} \frac{\Gamma^2_N(t)}{t(1-t)}dt \right|\\
 &\hspace{1cm}\leq \int_{1/N}^{1-1/N}\frac{|Z_N(t)-\Gamma_N(t)|}{(t(1-t))^{1/2-\delta}}
 \frac{|Z_N(t)|+|\Gamma_N(t)|}{t(1-t)^{1/2+\delta}}dt\\
 &\hspace{1cm}\leq 2\left(\int_{1/N}^{1-1/N}\frac{(Z_N(t)-\Gamma_N(t))^2}{(t(1-t))^{1-2\delta}}dt\right)^{1/2}\\
 &\hspace{1.99cm}\times\left(  \int_{1/N}^{1-1/N}\frac{Z_N^2(t)}{(t(1-t))^{1+2\delta}} dt
 + \int_{1/N}^{1-1/N}\frac{\Gamma_N^2(t)}{(t(1-t))^{1+2\delta}} dt \right)^{1/2}.
 \end{align*}
 It follows from \eqref{ga-co} that
 \begin{align*}
 \int_{1/N}^{1-1/N}\frac{(Z_N(t)-\Gamma_N(t))^2}{(t(1-t))^{1-2\delta}}dt\leq \sup_{0\leq t \leq 1}|Z_N(t)-\Gamma_N(t)|\int_0^1(t(1-t))^{-1+2\delta}dt=o_P(1).
 \end{align*}
Next we show that
\begin{align}\label{mom-1}
\int_{1/N}^{1-1/N}\frac{Z_N^2(t)}{(t(1-t))^{1+2\delta}} dt=O_P(1)\;\;\;\mbox{and}\;\; \int_{1/N}^{1-1/N}\frac{\Gamma_N^2(t)}{(t(1-t))^{1+2\delta}} dt=O_P(1).
\end{align}
We note that $E\Gamma_N^2(t)=b(t)-2tb(t)+t^2b(1)=(b(t)-b(1))(1-2t)+(1-t)^2b(1)$ and therefore
\begin{equation}\label{tail-1}
E\Gamma_N^2(t)\leq c_1t(1-t)\;\;\mbox{with some constant}\;\;c_1
\end{equation}
 resulting in
$$
E\int_{1/N}^{1-1/N}\frac{\Gamma_N^2(t)}{(t(1-t))^{1+2\delta}} dt\leq c_1\int_0^1(t(1-t))^{-\delta}dt,
$$
which proves the second half of \eqref{mom-1}. Using Assumption \ref{as-2} and $|Ee_ie_k|=O((|k-i|+1)^{-\kappa})$
we get
\begin{align*}
E\left(\sum_{\ell=k}^m u_\ell\right)^2=\sum_{\ell=k}^m\sum_{\ell'=k}^m a(\ell/N)a(\ell/N)Ee_{\ell}e_{\ell'}\leq c_2\sum_{\ell=k}^m\sum_{\ell'=k}^m |Ee_{\ell}e_{\ell'}|\leq c_3(m-k)
\end{align*}
with some constants $c_2$ and $c_3$. Hence $EZ^2_N(t)\leq c_4t(1-t)$ for all $1/N\leq t \leq 1-1/N$ which implies immediately that
$$
E\int_{1/N}^{1-1/N}\frac{Z_N^2(t)}{(t(1-t))^{1+2\delta}} dt=O(1),
$$
completing the proof of the first part of \eqref{mom-1} via Markov's inequality. We obtain from \eqref{tail-1} that
$$
\int_0^{1/N}\frac{\Gamma_N^2(t)}{t(1-t)}dt=o_P(1)\;\;\mbox{and}\;\;\int_{1-1/N}^1\frac{\Gamma_N^2(t)}{t(1-t)}dt=o_P(1),
$$
which yields
$$
\int^{1-1/N}_{1/N}\frac{\Gamma_N^2(t)}{t(1-t)}dt\;\;\stackrel{{\mathcal D}}{\to}\;\;\int_0^{1}\frac{\Gamma^2(t)}{t(1-t)}dt,
$$
since the distribution of $\Gamma_N(t)$ does not depend in $N$.
 \qed

\medskip
\noindent
{\it Proof of Theorem \ref{alt-1}.} It follows from the definition of $X_i$ that
$$
Z_N(t)=N^{-1/2}\left(\sum_{\ell=1}^{\lf Nt \rf}u_\ell-\frac{\lf Nt\rf}{N}\sum_{\ell=1}^Nu_\ell\right) +N^{-1/2}d_{\lf Nt\rf}.
$$
By Theorem \ref{basic} we have that
$$
\sup_{0\leq t \leq 1}N^{-1/2}\left| \sum_{\ell=1}^{\lf Nt \rf}u_\ell-\frac{\lf Nt\rf}{N}\sum_{\ell=1}^Nu_\ell   \right| =O_P(1)
$$
and by Assumption \ref{mu-con}
$$
\sup_{0\leq t \leq 1}\left| N^{-1/2}\left( \sum_{\ell=1}^{\lf Nt \rf}u_\ell-\frac{\lf Nt\rf}{N}\sum_{\ell=1}^Nu_\ell\right)  N^{-1/2}d_{\lf Nt\rf } \right|
=O_P(N^{1/2}).
$$
Hence \eqref{alt-st-1} follows from the definition of $d(u)$ and Assumption \ref{mu-con} via the Lebesgue dominated convergence theorem (cf.\ Hewitt and Stromberg (1969, p.\ 172)).\\
Similar arguments can be used to prove \eqref{alt-st-1} and therefore the details are omitted.
\qed

\medskip
\noindent
{\it Proof of Theorem \ref{p:lr}.}
We have the  usual representation
\[
\hat\bbe_N - \bbe_0 = \bA_N^{-1} \bZ_N,
\]
where $\bbe_0$ is  the true parameter vector under $H_0$ and
$\bZ_N = [ Z_N(1), \ldots, Z_N(p) ]^\T$ with $Z_N(j) = \sum_{\ell = 1}^N x_\ell(j) u_\ell, \ \ 1 \le j \le p$. Assumption \ref{as-2} yields that $a(\cdot)$ is bounded and therefore by Assumptions \ref{as-cor} and \ref{a:xu-indep} we conclude that
\begin{align*}
EZ_N(j)&=\sum_{\ell=1}^N\sum_{k=1}^NE[x_\ell(j)x_k(j)u_\ell u_k]\leq\sum_{\ell=1}^N\sum_{k=1}^N|E[x_\ell(j)x_k(j)]||E[u_\ell u_k]|\\
&\leq O(1)\sum_{\ell=1}^N\sum_{k=1}^N(Ex^2_\ell(j)Ex^2_k(j))^{1/2}(|k-\ell|+1)^{-\kappa}=O(N).
\end{align*}
Since $A^{-1}_N=O_P(1/N)$, we obtain that
\beq\label{beta-1}
N^{1/2}\|\hat\bbe_N - \bbe_0\|=O_P(1).
\eeq
Using Assumption \ref{a:xu-indep} we get via the ergodic theorem that
\beq\label{ergod-1}
\frac{1}{N} \max_{1\leq i \leq N}\left\|\sum_{j=1}^i\bx_j-\frac{i}{N}\sum_{j=1}^N\bx_j   \right\|=o_P(1).
\eeq
It follows from \eqref{beta-1} and \eqref{ergod-1} that
\begin{align*}
N^{-1/2}\max_{1\leq i \leq N}\left|\left(\sum_{j=1}^i\hat{u}_j-\frac{i}{N}\sum_{j=1}^N\hat{u}_j\right)-\left(\sum_{j=1}^i{u}_j-\frac{i}{N}\sum_{j=1}^N{u}_j\right)\right|=o_P(1),
\end{align*}
completing the proof of Theorem \ref{p:lr}.
\qed

\medskip
\noindent
{\it Proof of Theorem \ref{p:nr}.} First we write
$$
L_N(\btheta)=\sum_{i=1}^N u_i^2+V_N(\btheta),\quad V_N(\btheta)=2\sum_{i=1}^Nu_i(h(\bx_i,\btheta_0)-h(\bx_i, \btheta))+\sum_{i=1}^N(h(\bx_i,\btheta_0)-h(\bx_i, \btheta))^2
$$
and  the location of the minimum of $L_N$ and $V_N$ is the same. Using Assumption \ref{a:h} and the ergodic we get that
$$
\sup_{\btheta \in \bTheta}\left|  \frac{1}{N}L_N(\btheta)-V(\btheta) \right|=o(1)\;\;\;\;a.s.,\;\;\;\;\mbox{where}\;\;\;V(\btheta)=E(h(\bx_0,\btheta_0)-h(\bx_0,\btheta))^2.
$$
Since $V(\btheta)$ has a unique maximum at $\btheta_0$, standard arguments yield (c.f.\ Pfanzagl (1994)) that
\beq\label{cos-th}
\hat{\btheta}_N\to\btheta_0\;\;\;\;a.s.
\eeq
Next we observe that
$$
\frac{\partial }{\partial\btheta}L_N(\hat{\btheta}_N)={\bf 0}.
$$
Also, by the ergodic theorem and Assumption \ref{a:h} we have that
$$
\sup_{\btheta \in \bTheta}\left\|\frac{\partial^2}{\btheta^2}\frac{1}{N}L_N(\btheta)-\frac{\partial^2}{\btheta^2}V(\btheta)\right\|=o(1)\;\;a.s.,
$$
$\partial^2 V(\btheta)/\btheta^2$ is continuous on $\bTheta$ and $\partial^2 V(\btheta_0)/\btheta^2$ is nonsingular since $V(\theta)$ has a unique
minimum at $\btheta_0$. Applying the mean value theorem coordinatewise we obtain that
$$
\frac{\partial}{\partial\btheta}L_N(\btheta_0)=
\frac{\partial}{\partial\btheta}L_N(\btheta_0) -\frac{\partial}{\partial\btheta}L_N(\hat{\btheta}_N)=\bG_N(\btheta_0-\hat{\btheta}_N)^\T
$$
and
$$
\frac{1}{N}\bG_N\to \frac{\partial^2}{\btheta^2}V(\btheta_0)\;\;\;a.s.
$$
Following the proof of Theorem \ref{p:lr} one can verify that
$$
E\left\|\frac{\partial}{\partial\btheta}L_N(\btheta_0)\right\|^2=O(N)
$$
and therefore
\beq\label{non-1}
N^{1/2}\left\| \hat{\btheta}_N-\btheta_0\right\|=O_P(1).
\eeq
Using a two term Taylor expansion with the ergodic theorem and \eqref{non-1}
we get that
$$
\sum_{j=1}^i\hat{u}_j-\frac{i}{N}\sum_{j=1}^N\hat{u}_j=\sum_{j=1}^i{u}_j-\frac{i}{N}\sum_{j=1}^N{u}_j+{\mathcal R}_{i,1}+{\mathcal R}_{i,2}
$$
with
$$
{\mathcal R}_{i,1}=\left(\sum_{j=1}^i\frac{\partial}{\btheta}{h}(\bx_j,\btheta_0)-\frac{i}{N}\sum_{j=1}^N\frac{\partial}{\btheta}{h}(\bx_j,\btheta_0)\right)^\T(\btheta_0-\hat{\btheta}_N)
$$
and $N^{-1/2}\max_{1\leq i\leq N}|{\mathcal R}_{i,2}|=o_P(1)$.  Repeating the argument used in the proof of Theorem \ref{p:lr}, by \eqref{non-1} and the ergodic theorem we obtain that $N^{-1/2}\max_{1\leq i\leq N}|{\mathcal R}_{i,1}|=o_P(1)$. Hence the proof is complete.
\qed

\medskip

 \subsection{Proofs of the results in Section \ref{s:comp}}~\\

 \begin{lemma}\label{eig-lem-1} If Assumptions \ref{as-1}--\ref{as-3}, \ref{mu-con}, \ref{ass-un} are satisfied and $\{e_i, -\infty<i<\infty\}$ is a stationary and ergodic sequence with $Ee_0=0$ and $Ee^2_0=\sigma^2$, then we have that
 \begin{equation}\label{H-1}
 \max_{1\leq k \leq N}\left|H_N(k/N)-\frac{1}{N}\left( \sum_{\ell=1}^ku^2_\ell  +\sum_{\ell=1}^k(\mu_\ell-\bar{\mu})^2  \right)\right|=O_P(N^{-1/2}\log N)
 \end{equation}
 and
 \begin{equation}\label{H-2}
 \int_0^1\left( H_N(u)-(b(u)+g(u))   \right)^2dt=o_P(1).
 \end{equation}
 \end{lemma}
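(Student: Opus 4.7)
The plan is to prove (i) by decomposing $(X_i-\bar X_N)^2$ into contributions from the deterministic means and the heteroskedastic noise, and then to derive (ii) from (i) by combining an ergodic-theoretic argument for the noise partial sums with pointwise convergence of the mean partial sums.

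For (i), write $X_i - \bar X_N = (\mu_i - \bar\mu) + (u_i - \bar u)$, with $\bar\mu = N^{-1}\sum_{\ell=1}^N \mu_\ell$ and $\bar u = N^{-1}\sum_{\ell=1}^N u_\ell$. After expansion and cancellation, the quantity inside the absolute value in \eqref{H-1} equals
\[
\frac{1}{N}\biggl(2\sum_{i=1}^k (\mu_i - \bar\mu) u_i - 2\bar u \sum_{i=1}^k (\mu_i - \bar\mu) - 2\bar u\sum_{i=1}^k u_i + k\bar u^2\biggr).
\]
Theorem \ref{basic} yields $\max_{1 \le k \le N}\bigl|\sum_{i=1}^k u_i\bigr| = O_P(N^{1/2})$ and hence $|\bar u| = O_P(N^{-1/2})$, while Assumption \ref{mu-con} bounds $\bigl|\sum_{i=1}^k (\mu_i - \bar\mu)\bigr| = O(N)$ uniformly in $k$. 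Since $\mu_i - \bar\mu$ is bounded and $\{e_i\}$ is uncorrelated with variance $\sigma^2$ under Assumption \ref{ass-un}, the Menshov--Rademacher maximal inequality applied to the orthogonal sum $\sum_{i=1}^k (\mu_i-\bar\mu) a(i/N) e_i$ gives $\max_{1\le k\le N}\bigl|\sum_{i=1}^k (\mu_i - \bar\mu) u_i\bigr| = O_P(N^{1/2}\log N)$. Combining the four bounds produces \eqref{H-1}, the $\log N$ being contributed by the Menshov--Rademacher step.

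For (ii), in view of (i) it suffices to prove
\[
\int_0^1 \biggl(\frac{1}{N}\sum_{\ell=1}^{\lfloor Nu\rfloor} u_\ell^2 - b(u)\biggr)^2 du = o_P(1), \qquad \int_0^1 \biggl(\frac{1}{N}\sum_{\ell=1}^{\lfloor Nu\rfloor} (\mu_\ell - \bar\mu)^2 - g(u)\biggr)^2 du \to 0.
\]
The second statement follows from the definition of $g$ together with dominated convergence, since the integrand is uniformly bounded under Assumption \ref{mu-con}. For the first, decompose $u_\ell^2 = \sigma^2 a^2(\ell/N) + a^2(\ell/N)(e_\ell^2 - \sigma^2)$. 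The deterministic piece is a Riemann sum converging pointwise to $b(u) = \sigma^2\int_0^u a^2(v)dv$ by Assumption \ref{as-2}, and dominated convergence upgrades this to $L^2[0,1]$ convergence. For the random piece, Abel summation combined with the bounded variation of $a^2$ reduces the problem to uniformly controlling the partial sums $T_k = \sum_{\ell=1}^k (e_\ell^2 - \sigma^2)$; by Birkhoff's ergodic theorem for the stationary ergodic zero-mean sequence $\{e_\ell^2 - \sigma^2\}$, one obtains $\max_{1\le k \le N}|T_k|/N = o_P(1)$, so the random contribution is $o_P(1)$ uniformly in $u$ and hence negligible in $L^2[0,1]$.

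The main obstacle is handling the weighted noise-square partial sum in part (ii): without any mixing assumption on $\{e_\ell\}$, one cannot appeal to a CLT-type rate for $T_k$. The Abel summation device, which trades the bounded variation of $a^2$ for uniform control of unweighted partial sums, is precisely what allows Birkhoff's theorem to finish the job. In (i), the limitation to mere uncorrelatedness (rather than a martingale-difference property) forces the use of the Menshov--Rademacher inequality in place of Doob's inequality, and is the source of the extra $\log N$ factor in the stated bound.
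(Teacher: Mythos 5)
Your proof is correct and follows essentially the same route as the paper's: the same expansion of $(X_i-\bar X_N)^2$ into mean, noise and cross terms, the same Menshov(--Rademacher) maximal inequality for $\sum_{i\le k}(\mu_i-\bar\mu)u_i$ as the source of the $\log N$ factor, and the same Abel-summation-plus-ergodic-theorem treatment of $\sum_{\ell\le k}(u_\ell^2-\sigma^2 a^2(\ell/N))$ followed by the Riemann-sum convergence to $b(u)$. The only cosmetic difference is that the paper runs the ergodic step with an explicit truncation index $k^*$ inside the Abel sum, whereas you first assert $\max_{k\le N}|T_k|/N=o_P(1)$ and then invoke the total variation of $a^2$; these are equivalent.
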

 \begin{proof}

  It is easy to see that
 \begin{align*}
 NH_N(k/N)=&\sum_{\ell=1}^ku^2_\ell +\sum_{\ell=1}^k(\mu_\ell-\bar{\mu})^2+2\sum_{\ell=1}^ku_\ell(\mu_\ell-\bar{\mu})+2(\bar{\mu}-\bar{X}_N)\sum_{\ell=1}^ku_\ell\\
&\hspace{1cm} +2(\bar{\mu}-\bar{X}_N)\sum_{\ell=1}^k(\mu_\ell-\bar{\mu})+k(\bar{\mu}-\bar{X}_N)^2.
 \end{align*}
 %where
 %$$
 %\bar{\mu}=\frac{1}{N}\sum_{\ell=1}^N\mu_\ell.
% $$
 It follows from Theorem \ref{basic} that
 $$
 \bar{X}_N-\bar{\mu}=\frac{1}{N}\sum_{\ell=1}^Nu_\ell=O_P(N^{-1/2})
 $$
 and
 $$
 \max_{1\leq k \leq N}\left|\sum_{\ell=1}^k u_\ell\right|=O_P(N^{1/2})
 $$
 and therefore
 $$
 \max_{1\leq k \leq N}\left| (\bar{\mu}-\bar{X}_N)\sum_{\ell=1}^ku_\ell  \right|=O_P(1),\quad \max_{1\leq k \leq N}\left| (\bar{\mu}-\bar{X}_N)\sum_{\ell=1}^k(\mu_\ell-\bar{\mu})\right|=O_P(N^{1/2})
 $$
 and
 $$
\max_{1\leq k \leq N} k(\bar{\mu}-\bar{X}_N)^2=O_P(1).
 $$
 Using Assumptions \ref{as-2}, \ref{mu-con} and  \ref{ass-un}, we get that
 $$
 E\left( \sum_{\ell=m}^ku_\ell(\mu_\ell-\bar{\mu}) \right)^2\leq c_1(k-m)\;\;\mbox{for all}\;\;1\leq k \leq m\leq N
 $$
 with some $c_1$ and therefore by Menshov's inequality (cf.\ Billingsley (1968, p.\ 102)) that
 $$
 \max_{1\leq k \leq N}\left|\sum_{\ell=1}^ku_\ell(\mu_\ell-\bar{\mu})  \right|=O_P(N^{1/2}\log N).
 $$
 Next we show that
 \begin{equation}\label{abel}
 \max_{1\leq k \leq N}\left|\sum_{\ell=1}^k(u^2_\ell-\sigma^2a^2(\ell/N))\right|=o_P(N).
 \end{equation}
 Set
 $$
\cS(0)=0\;\;\mbox{ and }\;\;\cS(k)=\sum_{\ell=1}^k(e_\ell^2-\sigma^2).
 $$
 By Abel's summation formula we have
 \begin{align*}
 \sum_{\ell=1}^k(u^2_\ell-\sigma^2a^2(\ell/N))&=\sum_{\ell=1}^k a^2(\ell/N)(\cS(\ell)-\cS(\ell-1))\\
 &=\cS(N)a^2(1)-\sum_{\ell=1}^{k-1}\cS_\ell(a^2((\ell+1)/N)-a^2(\ell/N)).
 \end{align*}

 It follows from the ergodic theorem (cf.\ Breiman (1968, p.\ 118) that
 $$
 \lim_{k\to \infty}\frac{1}{k}\cS(k)=0\quad \mbox{a.s.}
 $$
 For any $\delta>0$, there is a random variable $k^*=k^*(\omega)$ such that $|\cS(k)|\leq \delta k$ if $k\leq k^*$ and therefore for $k>k^*$
 \begin{align*}
 \frac{1}{N}&\left|\sum_{\ell=1}^{k-1}\cS_\ell(a^2((\ell+1)/N)-a^2(\ell/N))\right|\\
 &\leq
 \frac{1}{N}\sum_{\ell=1}^{k^*}|\cS_\ell(a^2((\ell+1)/N)-a^2(\ell/N))|+\frac{1}{N}\sum_{\ell=k^*+1}^{k-1}\frac{|\cS_\ell|}{\ell}\ell|a^2((\ell+1)/N)-a^2(\ell/N)|\\
 &=o_P(1)+\delta\sum_{\ell=1}^{N-1}|a^2((\ell+1)/N)-a^2(\ell/N)|.
 \end{align*}
 It follows from Assumption \ref{as-2} that $a^2(t)$ also has bounded variation on $[0,1]$. Since $\delta$ can be as small as we want, the proof of \eqref{abel} is complete. Observing that
 $$
 \int_0^1\left(\frac{1}{N}\sum_{\ell=1}^{\lf Nt\rf}a^2(\ell/N)-\int_0^t a^2(s)ds\right)^2dt=o(1),
 $$
 the proof of \eqref{H-2} is complete.
 \end{proof}

 \medskip
 \noindent
 {\it Proof of Theorem \ref{under-th}.}  It is an immediate consequence of Lemma \ref{eig-lem-1} and the definition of $\widehat{C}_N(t,s)$.
\qed

It follows from the definition of $\hat{\gamma}_{N;k,\ell}$ that for $\ell\geq 0$
\begin{align*}
N\hat{\gamma}_{N;k,\ell}=r_{k,\ell,1}+\ldots +r_{k,\ell,9},
\end{align*}
where
\begin{align*}
%$$
&r_{k,\ell,1}=\sum_{i=1}^{k-\ell}(X_i-\mu_i)(X_{i+\ell}-\mu_{i+\ell}),&&r_{k,\ell,2}=(\bar{\mu}-\bar{X}_N)\sum_{i=1}^{k-\ell}(X_i-\mu_i)\\
%$$
%$$
&r_{k,\ell,3}=(\bar{\mu}-\bar{X}_N)\sum_{i=1}^{k-\ell}(X_{i+\ell}-\mu_{i+\ell}),&&r_{k,\ell,4}=(k-\ell)(\bar{\mu}-\bar{X}_N)^2\\
%$$
%$$
&r_{k,\ell,5}=\sum_{i=1}^{k-\ell}(X_i-\mu_i)(\mu_{i+\ell}-\bar{\mu}),&&r_{k,\ell,6}=\sum_{i=1}^{k-\ell}(X_{i+\ell}-\mu_{i+\ell})(\mu_{i}-\bar{\mu})\\
%$$
%$$
&r_{k,\ell,7}=(\bar{\mu}-\bar{X}_N)\sum_{i=1}^{k-\ell}(\mu_i-\bar{\mu}),&&r_{k,\ell,8}=(\bar{\mu}-\bar{X}_N)\sum_{i=1}^{k-\ell}(\mu_{i+\ell}-\bar{\mu})
%$$
\end{align*}
and
$$
r_{k,\ell,9}=\sum_{i=1}^{k-\ell}(\mu_i-\bar{\mu})(\mu_{i+\ell}-\bar{\mu}).
$$
\begin{lemma}\label{lolemnull} If $H_0$,  Assumptions \ref{as-1}, \ref{as-2} and \ref{ass-k}--\ref{as-ber} are satisfied, then we have that
%(i) If $H_0$ holds, then
$$
%\int_0^1\!\!\!\int_0^1(\widetilde{C}_N(t,s)-E\widetilde{C}_N(t,s))^2dts
\int_0^1(\hat{g}_N(t)-b(t))^2=o_P(1).
$$
%(ii) If $H_A$
\end{lemma}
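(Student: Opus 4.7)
The plan is to decompose the estimator into a main autocovariance-type term plus correction terms, use $H_0$ to eliminate most of the latter, and then approximate the main term by its deterministic limit using a combination of ergodicity and kernel-estimator asymptotics. Under $H_0$, $\mu_i \equiv \bar{\mu}$, so $r_{k,\ell,j} = 0$ for $j = 5,\ldots,9$. For the surviving corrections, Theorem \ref{basic} gives $\max_{1\le k \le N}\left|\sum_{i=1}^k u_i\right| = O_P(N^{1/2})$, so $\bar{X}_N - \bar{\mu} = O_P(N^{-1/2})$ and
\begin{align*}
\max_{1 \le k \le N,\ |\ell| \le ch}\bigl(|r_{k,\ell,2}| + |r_{k,\ell,3}| + |r_{k,\ell,4}|\bigr) = O_P(1).
\end{align*}
Since $K$ has compact support (Assumption \ref{ass-k}), only $O(h)$ lags contribute, and after dividing by $N$ the total correction is $O_P(h/N) = o_P(1)$ by Assumption \ref{ass-h}.

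It remains to analyse the main term
\begin{align*}
T_N(t) = \frac{1}{N}\sum_{|\ell|\le ch} K(\ell/h) \sum_{i \in I_{\ell,t}} u_i u_{i+\ell},
\end{align*}
where $I_{\ell,t}$ is the index range dictated by the definition of $\hat{\gamma}_{N;\lfloor Nt\rfloor,\ell}$, and to show $\int_0^1 (T_N(t) - b(t))^2\,dt = o_P(1)$. I would write $u_i u_{i+\ell} = a(i/N)a((i+\ell)/N)e_i e_{i+\ell}$ and replace $a((i+\ell)/N)$ by $a(i/N)$: the replacement error is controlled by Assumption \ref{as-2}, since for $|\ell| \le ch$ the Cesaro-average of $|a((i+\ell)/N) - a(i/N)|$ over $i$ is small (bounded variation, with the increment vanishing uniformly in $\ell$ as $h/N \to 0$). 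Next write $e_i e_{i+\ell} = \gamma_\ell + \eta_{i,\ell}$ where $\gamma_\ell = E[e_0 e_\ell]$ (Assumption \ref{as-ber} supplies stationarity), yielding a deterministic part
\begin{align*}
\Biggl(\sum_{|\ell|\le ch} K(\ell/h)\gamma_\ell\Biggr) \cdot \frac{1}{N}\sum_{i=1}^{\lfloor Nt\rfloor} a^2(i/N),
\end{align*}
plus a stochastic remainder. The kernel sum converges to $\sum_\ell \gamma_\ell = \sigma^2$ by the summability $|\gamma_\ell| = O(\ell^{-\kappa})$ (implied by Assumption \ref{as-ber}, cf.\ Assumption \ref{as-cor}) together with $K(0)=1$ and dominated convergence; the Riemann sum converges to $\int_0^t a^2(u)\,du$ uniformly in $t$ since $a^2$ has bounded variation. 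The product therefore tends to $b(t) = \sigma^2\int_0^t a^2(u)\,du$, and the convergence is in $L^2(dt)$ by bounded convergence.

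The hard part will be bounding the stochastic remainder $N^{-1}\sum_{|\ell|\le ch} K(\ell/h)\sum_i a^2(i/N)\eta_{i,\ell}$ in $L^2(dt)$-probability. My plan is to apply Fubini and bound the expected $L^2(dt)$-norm by a second-moment calculation. Using the Bernoulli shift representation (Assumption \ref{as-ber}(i)) together with the $L^4$ approximation rate $\alpha > 2$ (Assumption \ref{as-ber}(iii)), the fourth-order covariances $E[\eta_{i,\ell}\eta_{j,\ell'}]$ decay rapidly in $|i-j|$; summing the kernel-weighted double sum then yields a total variance of order $O(h/N)$, which vanishes by Assumption \ref{ass-h}. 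The technical heart of this step is the $m$-dependent truncation $e_{i,m}$ that replaces mixing assumptions with coupling bounds, so that standard covariance summation can be invoked after a controlled truncation error chosen, e.g., as $m = \lfloor h^{1/2}\rfloor$.
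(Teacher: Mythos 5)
Your proposal follows the same architecture as the paper's proof: the decomposition $N\hat{\gamma}_{N;k,\ell}=r_{k,\ell,1}+\cdots+r_{k,\ell,9}$, the vanishing of $r_{k,\ell,5},\ldots,r_{k,\ell,9}$ under $H_0$, the $O_P(h/N)$ bound on the terms involving $\bar{X}_N-\bar{\mu}$, the mean/fluctuation split of the main term with deterministic limit $\bigl(\sum_\ell\gamma_\ell\bigr)\int_0^t a^2(u)du=b(t)$, and a fourth--moment bound on the fluctuation via the Bernoulli--shift coupling. The genuine gap is in the step where you replace $a((i+\ell)/N)$ by $a(i/N)$ \emph{before} centering $e_ie_{i+\ell}$, uniformly over all kernel lags $|\ell|\le ch$. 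For a BV function the bound $\frac{1}{N}\sum_i|a((i+\ell)/N)-a(i/N)|\le \ell V(a)/N$ is sharp (take $a$ a single step function), so summing over the $O(h)$ lags and controlling $|e_ie_{i+\ell}|$ in $L^1$ gives a replacement error of order $O_P(h^2/N)$, not $o_P(1)$: Assumption \ref{ass-h} gives $h/N\to0$ but not $h^2/N\to0$ (e.g.\ $h=N^{3/4}$). The smallness of the Cesaro average for each fixed $\ell$ does not survive the sum over $O(h)$ lags. The paper avoids this by performing the replacement only on the expectation $Er_{k,\ell,1}$ and only for lags $\ell\le M$ with $M$ fixed, the tail being killed by $\sum_{\ell>M}|r(\ell)|$, while the fluctuation $r_{k,\ell,1}-Er_{k,\ell,1}$ is handled by the variance bound with the coefficients $a(i/N)a((i+\ell)/N)$ left intact. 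Your argument is repaired by reordering: center first, so that the replacement error carries the summable weights $\ell|\gamma_\ell|$ and becomes $O(1/N)$ — but that is essentially the paper's ordering.

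A secondary issue is the coupling depth in the variance step. A fixed truncation $m=\lf h^{1/2}\rf$ cannot deliver the required bound: for triples $(j,\ell,\ell')$ with $j-\ell>m$ the coupling error per term is $m^{-\alpha}=h^{-\alpha/2}$, and there are of order $kh^2$ such triples, giving a contribution of order $Nh^{2-\alpha/2}$, which is not $O(h)$ once $h/N\to0$. The paper couples at a depth equal to the gap $j-\ell-1$ between the two index pairs, so the error per triple is $(j-\ell)^{-\alpha}$ and the sum over all triples is $O(h)$ precisely because $\alpha>2$; the truncation depth must grow with the separation, not be fixed in advance.
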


\begin{proof}

It is easy to see that
 \begin{align*}
 Er_{k,\ell,1}=\sum_{i=1}^{k-\ell}Eu_iu_{i+\ell}=\sum_{i=1}^{k-\ell}a(i/N)a((i+\ell)/N)r(\ell),\;\;\mbox{where} \;r(\ell)=Ee_0e_\ell,
 \end{align*}
 resulting in
 \begin{align*}
 E\sum_{\ell=1}^kK(\ell/h)\frac{1}{N}r_{k,\ell,1}=\frac{1}{N}\sum_{\ell=1}^kK(\ell/h)r(\ell)\sum_{i=1}^{k-\ell}a(i/N)a((i+\ell)/N).
 \end{align*}
 Let $M$ be a positive integer. We have  for $k\geq M$ that
 \begin{align*}
F_N(k)&=\biggl|\sum_{\ell=1}^kK(\ell/h)r(\ell) \frac{1}{N} \sum_{i=1}^{k-\ell}a(i/N)a((i+\ell)/N)-\sum_{\ell=1}^MK(\ell/h)r(\ell) \frac{1}{N} \sum_{i=1}^{k-\ell}a(i/N)a((i+\ell)/N)\biggl|\\
&\leq c_1\sum_{\ell=M+1}^\infty |r(\ell)|
 \end{align*}
 with some constant $c_1$ since by Assumptions \ref{as-2} and \ref{ass-k}  $K$ and $a$ are bounded functions.
  If $1\leq k\leq M$, then
 \begin{align*}
 \Biggl|\sum_{\ell=1}^kK(\ell/h)r(\ell) \frac{1}{N} \sum_{i=1}^{k-\ell}a(i/N)a((i+\ell)/N)\Biggl|\leq c_2\frac{M^2}{N}.
 \end{align*}
%if $N\geq N_1=N_1(\delta)$.
 Hence  we have
\begin{align}\label{ker-1}
\max_{1\leq k \leq N}|F_N(k)|\leq c_1\sum_{\ell=M+1}^\infty |r(\ell)|+  c_2\frac{M^2}{N}.
\end{align}
Using Assumptions \ref{as-2} and \ref{ass-k} we have
\begin{align}\label{ker-2}
\max_{1\leq k \leq N}\Biggl| \sum_{\ell=1}^{M}(K(\ell/h)-1)r(\ell) \frac{1}{N} \sum_{i=1}^{k-\ell}a(i/N)a((i+\ell)/N)\Biggl|\leq c_3\frac{M^2}{h}
\end{align}
with some constant $c_3$
and
\begin{align}\label{ker-3}
\max_{1\leq k \leq N}&\Biggl| \sum_{\ell=1}^{M} r(\ell) \frac{1}{N} \left(  \sum_{i=1}^{k-\ell}a(i/N)a((i+\ell)/N)- \sum_{i=1}^{k}a(i/N)a((i+\ell)/N)  \right)\Biggl|\\
&\leq c_4\left(\sum_{\ell=1}^\infty |r(\ell)|\right)\frac{M}{N}.\notag
\end{align}
Next we note that
\begin{align}\label{ker-4}
\max_{1\leq k\leq N}&\left|   \sum_{\ell=1}^{M} r(\ell) \frac{1}{N}\sum_{i=1}^{k}a(i/N)(a((i+\ell)/N)-a(i/N)) \right|\\
&\leq c_5\frac{1}{N}\max_{1\leq \ell\leq N}\sum_{i=1}^N |a((i+\ell)/N)-a(i/N)|\notag\\
&\leq c_6 \frac{M}{N},\notag
\end{align}
by Assumption \ref{as-2}, where $c_5$ and $c_6$ are constants and $a(u)=0$ for $u>1$. Finally,
\begin{align}\label{ker-5}
\max_{1\leq k\leq N}\left|\sum_{\ell=1}^{M} r(\ell) \frac{1}{N}\sum_{i=1}^{k}a^2(i/N)-\sum_{\ell=1}^{\infty} r(\ell) \frac{1}{N}\sum_{i=1}^{k}a^2(i/N)\right|\leq
c_7\sum_{\ell=M+1}^\infty |r(\ell)|,
\end{align}
with some constant $c_7$. Putting together \eqref{ker-1}--\eqref{ker-5} we conclude that
\begin{align*}
\limsup_{N\to \infty}\left|
 E\sum_{\ell=1}^kK(\ell/h)\frac{1}{N}r_{k,\ell,1}-\left(\sum_{\ell=1}^\infty r(\ell)\right)\frac{1}{N}\sum_{i=1}^k a^2(i/N)\right|\leq (c_1+c_7)\sum_{M+1}^\infty|r(\ell)|,
\end{align*}
and since we can take $M$ as large as we want to we obtain
\begin{align*}
\lim_{N\to \infty}\left|
 E\sum_{\ell=1}^kK(\ell/h)\frac{1}{N}r_{k,\ell,1}-\left(\sum_{\ell=1}^\infty r(\ell)\right)\frac{1}{N}\sum_{i=1}^k a^2(i/N)\right|=0.
\end{align*}
Next we show that
\begin{equation}\label{ker-6}
\frac{1}{N}\sum_{k=1}^N\left(\sum_{\ell=1}^k K(h/\ell)\frac{1}{N}(r_{k,\ell,1}-E r_{k,\ell,1})    \right)^2=o_P(1).
\end{equation}
We observe that by the stationarity of the $e_i$'s and Assumptions \ref{as-2} and \ref{ass-k}
\begin{align*}
E\frac{1}{N}&\sum_{k=1}^N\left(\sum_{\ell=1}^k K(h/\ell)\frac{1}{N}(r_{k,\ell,1}-E r_{k,\ell,1})    \right)^2\\
&=\frac{1}{N^3}\sum_{k=1}^N\sum_{\ell=1}^k\sum_{\ell'=1}^kK(\ell/h)K(\ell'/h)\sum_{i=1}^{k-\ell}\sum_{j=1}^{k-\ell'}a(\ell/N)a((\ell+i)/N)a(j/N)a((j+\ell')/N)\\
&\hspace{1cm}\times(E
e_ie_{i+\ell}e_{j}e_{j+\ell'}-r(\ell)r(\ell'))\\
&\leq c_8\frac{1}{N^3}\sum_{k=1}^N\sum_{\ell=1}^{ch}\sum_{\ell'=1}^{ch}k\sum_{j=1}^{k}|E
e_0e_{\ell}e_{j}e_{j+\ell'}-r(\ell)r(\ell')|\\
&\leq c_8\frac{1}{N^2}\sum_{k=1}^N\sum_{\ell=1}^{ch}\sum_{\ell'=1}^{ch}\sum_{j=1}^{k}|E
e_0e_{\ell}e_{j}e_{j+\ell'}-r(\ell)r(\ell')|.
\end{align*}
Let
\begin{align*}
G_{1,k}&=\{(j,\ell, \ell'):\; ch+1\leq j\leq k, 1\leq\ell, \ell'\leq ch\}\\
G_{2,k}&=\{(j,\ell, \ell'):\; 1\leq j\leq ch, 1\leq\ell, \ell'\leq ch\}
\end{align*}
and define
$$
Q_{1,k}=\sum_{(j,\ell,\ell')\in G_{1,k}}|E
e_0e_{\ell}e_{j}e_{j+\ell'}-r(\ell)r(\ell')|,  \quad Q_{2,k}=\sum_{(j,\ell,\ell')\in G_{1,k}}|E
e_0e_{\ell}e_{j}e_{j+\ell'}-r(\ell)r(\ell')|.
$$
Next we define
$$
\bae_{j,j-\ell-1}=f(\vare_j, \vare_{j-1}, \ldots ,\vare_{\ell+1}, \vare_{\ell}', \vare_{\ell-1}',\ldots )
$$
and
$$
\bae_{j+\ell',j+\ell'-\ell-1}=f(\vare_{j+\ell'}, \vare_{j+\ell'-1}, \ldots ,\vare_{\ell+1}, \vare_{\ell}',\vare_{\ell-1}',\ldots ),
$$
where $\vare_v', -\infty<v<\infty$ are independent copies of $\vare_0$, independent of $\vare_j, -\infty<j<\infty$.  It follows from Assumption \ref{as-ber} that
$(e_0, e_\ell)$ is independent of $(\bae_{j,j-\ell-1}, \bae_{j+\ell', j+\ell'-\ell-1})$. Also, according to the construction the vectors $(e_j,e_{j+\ell'})$ and
$(\bae_{j,j-\ell-1}, \bae_{j+\ell', j+\ell'-\ell-1})$ have the same distribution. Note that $Ee_0e_\ell \bae_{j,j-\ell-1}\bae_{j+\ell', j+\ell'-\ell-1}=
E[e_0e_\ell ]E[\bae_{j,j-\ell-1}\bae_{j+\ell', j+\ell'-\ell-1}]=r(\ell)r(\ell')$.  Hence
$$
Ee_0e_\ell e_{j}e_{j+\ell'}-r(\ell)r(\ell')=Ee_0e_\ell[e_je_{j+\ell'}-\bae_{j,j-\ell-1}\bae_{j+\ell',j+\ell'-\ell-1}].
$$
It follows from Assumption \ref{as-ber}
\begin{align*}
(E(e_j-\bae_{j,j-\ell-1})^4)^{1/4}\leq c_8(j-\ell)^{-\alpha}\;\;\mbox{and}\;\;(E(e_{j+\ell'}-\bae_{j+\ell',j+\ell'-\ell-1})^4)^{1/4}\leq c_8(j+\ell'-\ell)^{-\alpha}
\end{align*}
with some constant $c_8$ for all $(j,\ell,\ell')\in G_{1,k}$. Hence the cauchy--Schwartz inequality yields
\begin{align*}
|E&e_0e_\ell[e_je_{j+\ell'}-\bae_{j,j-\ell-1}\bae_{j+\ell',j+\ell'-\ell-1}]|\\
&E|e_0e_\ell e_{j}[e_{j+\ell'}-\bae_{j+\ell', j+\ell'-\ell-1}]|+E|e_0e_\ell \bae_{j+\ell',j+\ell'-\ell-1}[e_{j}-\bae_{j,j-\ell-1}]|\\
&\leq (Ee_0^4e_\ell^4 e_{j}^4E[e_{j+\ell'}-\bae_{j+\ell', j+\ell'-\ell-1}]^4)^{1/4}+(Ee_0^4e_\ell^4 \bae_{j+\ell',j+\ell'-\ell-1}^4E[e_{j}-\bae_{j,j-\ell-1}]^4)^{1/4}\\
&\leq c_8(Ee_0^4)^{3/4}((j-\ell)^{-\alpha}+(j+\ell'-\ell)^{-\alpha})\\
&\leq 2c_8(Ee_0^4)^{3/4}(j-\ell)^{-\alpha}
\end{align*}
for all $(j,\ell,\ell')\in G_{1,k}$. Thus we get with $c_9=2c_8(Ee_0^4)^{3/4}$ that
\begin{align*}
Q_{1,k}\leq c_9 \sum_{(j,\ell,\ell')\in G_{1,k}}(j-\ell)^{-\alpha}\leq c_{10}h\int_{ch+1}^\infty\int_{1}^{ch}(x-y)^{-\alpha}dydx\leq c_{11}h
\end{align*}
with some constants $c_{10}$ and $c_{11}$. \\
We note that
\begin{align*}
Q_{2,k}\leq \sum_{(j,\ell,\ell')\in G_{2,k}}|E[e_0e_{\ell}e_{j}e_{j+\ell'}]|+\sum_{(j,\ell,\ell')\in G_{2,k}}|E[e_0e_{\ell}]E[e_{j}e_{j+\ell'}]|
\end{align*}
and
$$
\sum_{(j,\ell,\ell')\in G_{2,k}}|E[e_0e_{\ell}]E[e_{j}e_{j+\ell'}]|\leq ch\left(\sum_{\ell=1}^\infty |Ee_0e_\ell|\right)^2.
$$
Let $e_{j,m}$ be the random variables defined in Assumption \ref{as-ber}. We get for all $0\leq s \leq t \leq v\leq 2ch $ that
\begin{align*}
e_0&e_se_te_v=e_0e_{s,s}(e_t-e_{t,t-s})(e_v-e_{v,v-t})+e_0e_{s,s}(e_t-e_{t,t-s})e_{v,v-t}+e_0e_{s,s}e_{t,t-s}(e_v-e_{v,v})\\
&+e_0e_{s,s}e_{t,t-s}e_{v,v}+e_0(e_s-e_{s,s})(e_t-e_{t,t-s})e_v+e_0(e_s-e_{s,s})e_{t,t-s}e_{v,v-t}\\
&+e_0(e_s-e_{s,s})e_{t,t-s}(e_v-e_{v,v-t}).
\end{align*}
The definition of $e_{j,m}$ yields that $e_{v,v-t}$ is independent of $e_0e_{s,s}(e_t-e_{t,t-s})$, $e_0$ is independent of $e_{s,s}e_{t,t-s}e_{v,v}$ and $e_{v,v-t}$ is independent of $e_0(e_s-e_{s,s})e_{t,t-s}$ and therefore
$$
E[e_0e_{s,s}(e_t-e_{t,t-s})e_{v,v-t}]=0,\;\;E[e_0e_{s,s}e_{t,t-s}e_{v,v}]=0\;\;\mbox{and}\;\;E[e_0(e_s-e_{s,s})e_{t,t-s}e_{v,v-t}]=0.
$$
Using Assumption \ref{as-ber} we obtain that
\begin{align*}
\sum_{1\leq s\leq t\leq v \leq 2ch}&E|e_0e_{s,s}(e_t-e_{t,t-s})(e_v-e_{v,v-t})|\\
&\leq (Ee_0^4)^{1/2}\sum_{1\leq s\leq t\leq v \leq 2ch}(E(e_t-e_{t,t-s})^4)^{1/4}(E(e_v-e_{v,v-t})^4)^{1/4}\\
&=O(h).
\end{align*}
Similarly,
$$
\sum_{1\leq s\leq t\leq v \leq 2ch}E|e_0(e_s-e_{s,s})(e_t-e_{t,t-s})e_v|=O(h)
$$
and
$$
\sum_{1\leq s\leq t\leq v \leq 2ch}E|e_0(e_s-e_{s,s})e_{t,t-s}(e_v-e_{v,v-t})|=O(h)
$$
and
$$
\sum_{1\leq s\leq t\leq v \leq 2ch}E|e_0e_{s,s}e_{t,t-s}(e_v-e_{v,v})|=O(1)\int_0^{2ch}\int_s^{2ch}\int_t^{2ch}v^{-\alpha}dvdtds=O(h).
$$
Thus we conclude
$$
Q_{2,k}\leq c_{l2}h,
$$
which completes the proof of
$$
E\frac{1}{N}\sum_{k=1}^N\left(\sum_{\ell=1}^k K(h/\ell)\frac{1}{N}(r_{k,\ell,1}-E r_{k,\ell,1})    \right)^2=O(h/N)
$$
and therefore \eqref{ker-6} follows via Markov's inequality.\\
 Theorem \ref{basic} yields that
\begin{align*}
\max_{1\leq k \leq N}\left|\sum_{\ell=1}^{k-1}K(\ell/h)\frac{1}{N}r_{k,\ell,2} \right|\leq \frac{ch}{N^2}\left|\sum_{i=1}^Nu_i\right|\max_{ |u |\leq c}|K(u)|
\max_{1\leq k\leq N}\left|\sum_{i=1}^k u_i\right|=O_P(h/N).
\end{align*}
Similar argument gives
\begin{align*}
\max_{1\leq k \leq N}\left|\sum_{\ell=1}^{k-1}K(\ell/h)\frac{1}{N}r_{k,\ell,3} \right|=O_P(h/N).
\end{align*}
Theorem \ref{basic} and Assumption \ref{ass-k} yield
\begin{align*}
\max_{1\leq k \leq N}\left| \sum_{\ell=1}^kK(\ell/N)\frac{1}{N}r_{k,\ell,4}\right|\leq  \frac{1}{N^3}\left( \sum_{i=1}^N u_i\right)^2 \max_{1\leq k\leq N}
\left|\sum_{\ell=1}^k K(\ell/h)(k-\ell)\right|=O_P(h/N).
\end{align*}
Since $r_{k,\ell,5}=r_{k,\ell,6}=r_{k,\ell,7}=r_{k,\ell,8}=r_{k,\ell,9}=0$ under $H_0$ we proved that
\begin{align}\label{fi-1}
\int_0^1\left(  \sum_{\ell=1}^{\lf Nu\rf-1}K(\ell/h)\hat{\gamma}_{N,\lf Nu\rf,\ell} -\sum_{\ell=1}^\infty r(\ell)\int_0^ua^2(v)dv\right)^2du=o_P(1).
\end{align}
Similar arguments show that
\begin{align}\label{fi-2}
\int_0^1\Biggl(  \sum^{0}_{\ell=-(\lf Nu\rf-1)}K(\ell/h)\hat{\gamma}_{N,\lf Nu\rf,\ell} -\sum^{0}_{\ell=-\infty} r(\ell)\int_0^ua^2(v)dv\Biggl)^2du=o_P(1).
\end{align}
Now Lemma \ref{lolemnull} follows from \eqref{fi-1} and \eqref{fi-2}.
\end{proof}

\medskip
\begin{lemma}\label{lolemalt} If $H_A$,  Assumptions \ref{as-1}, \ref{as-2} and \ref{ass-k}--\ref{as-ber} are satisfied, then we have
$$
\int_0^1\hat{g}_N^2(u)du=O_P(h^2).
$$
If in addition, Assumption
$$
%\int_0^1\!\!\!\int_0^1
\int_0^1\left(\frac{1}{h}\hat{g}_N(u)-g(u)\int_{-c}^cK(v)dv\right)^2du=o_P(1).
$$
\end{lemma}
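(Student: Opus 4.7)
The plan is to reuse the nine--term decomposition $N\hat\gamma_{N;k,\ell}=\sum_{j=1}^{9}r_{k,\ell,j}$ introduced above. Under $H_0$ the terms $r_{k,\ell,5},\ldots,r_{k,\ell,9}$ vanish, and the proof of Lemma \ref{lolemnull} already shows that the contribution of $r_{k,\ell,1},\ldots,r_{k,\ell,4}$ to $\hat{g}_N(u)$ is $O_P(1)$ in $L^2[0,1]$. Since the target order here is $O_P(h)$ and $h\to\infty$, those four terms are negligible, so the problem reduces to controlling the five mean--driven terms.

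Among these, the dominant one is
\[
r_{k,\ell,9}=\sum_{i=1}^{k-\ell}(\mu_i-\bar{\mu})(\mu_{i+\ell}-\bar{\mu}),
\]
which, thanks to $\max_i|\mu_i|=O(1)$ from Assumption \ref{mu-con}, satisfies $|r_{k,\ell,9}|/N=O(1)$ uniformly in $k$ and $\ell$, producing an $O_P(h)$ contribution to $\hat{g}_N(u)$ once weighted by $K(\ell/h)$ over the support $|\ell|\le ch$. The cross terms $r_{k,\ell,5},r_{k,\ell,6}$ involve sums of the form $\sum u_i(\mu_{i+\ell}-\bar{\mu})$, whose variance is $O(k)$ by Assumption \ref{as-cor} (implied by \ref{as-ber}) together with the boundedness of $a(\cdot)$ and of $\mu_i-\bar{\mu}$, so their contribution is $O_P(hN^{-1/2})=o_P(h)$. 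The centering terms $r_{k,\ell,7},r_{k,\ell,8}$ carry the factor $\bar{\mu}-\bar{X}_N=O_P(N^{-1/2})$ coming from Theorem \ref{basic} and are likewise $o_P(h)$. Adding the four bounds gives $\int_0^1\hat{g}_N^2(u)du=O_P(h^2)$, which is part (i).

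For part (ii) (under the additional Assumption \ref{as-sm}) only $r_{k,\ell,9}$ survives at the $O_P(h)$ scale. Writing
\[
\frac{r_{k,\ell,9}}{N}=\frac{1}{N}\sum_{i=1}^{k-\ell}(\mu_i-\bar{\mu})^2+\frac{1}{N}\sum_{i=1}^{k-\ell}(\mu_i-\bar{\mu})(\mu_{i+\ell}-\mu_i),
\]
the second piece is bounded by $O(1)\cdot N^{-1}\sum_i|\mu_{i+\ell}-\mu_i|=o(1)$ uniformly in $|\ell|\le ch$ by Assumption \ref{as-sm}, while the first piece converges to $g(k/N)$ by Assumption \ref{mu-con}. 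A Riemann--sum approximation $\sum_{|\ell|\le ch}K(\ell/h)=h\int_{-c}^{c}K(v)dv+o(h)$, valid because $K$ is Lipschitz and supported in $[-c,c]$, then yields $h^{-1}\hat{g}_N(u)\to g(u)\int_{-c}^{c}K(v)dv$ pointwise in $u$, and the uniform bound from part (i) lets the dominated convergence theorem upgrade this to the required $L^2[0,1]$ convergence.

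The main obstacle is the uniform control of the cross terms $r_{k,\ell,5},r_{k,\ell,6}$ over $k\le N$ and $|\ell|\le ch$ simultaneously; this is where the covariance decay embedded in Assumption \ref{as-ber} is essential, since it furnishes the variance bound $O(k)$ with a constant independent of $\ell$, which is precisely what is needed to absorb the widening kernel window as $h\to\infty$. A secondary, purely deterministic, subtlety is the Riemann--sum step in part (ii), which relies on \ref{ass-k}(iv) and leaves only an $O(1)$ boundary remainder that disappears after dividing by $h$.
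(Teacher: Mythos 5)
Your proposal follows essentially the same route as the paper: the same nine--term decomposition, the identification of $r_{k,\ell,9}$ as the single $O(h)$--scale term with the cross terms $r_{k,\ell,5},\ldots,r_{k,\ell,8}$ bounded by $O_P(hN^{-1/2})$, and for part (ii) the same splitting $(\mu_{i+\ell}-\bar{\mu})=(\mu_{i+\ell}-\mu_i)+(\mu_i-\bar{\mu})$ controlled by Assumption \ref{as-sm} followed by the Riemann--sum approximation $h^{-1}\sum_{|\ell|\le ch}K(\ell/h)\to\int_{-c}^{c}K(v)dv$. The argument is correct; the only detail worth making explicit is the replacement of the upper summation limit $k-\ell$ by $k$ in the $r_{k,\ell,9}$ term (an $O(h/N)$ adjustment per $\ell$, which the paper also records), but this does not change the approach.
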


\begin{proof} Following the proof of Theorem \ref{basic} one can show that
\begin{align*}
\max_{1\leq k\leq N}\max_{1\leq \ell <k}\left| \sum_{i=1}^{k-\ell}(X_i-\mu_i)(\mu_{i+\ell}-\bar{\mu}) \right|
=\max_{1\leq k\leq N}\max_{1\leq \ell <k}\left| \sum_{i=1}^{k-\ell}u_i(\mu_{i+\ell}-\bar{\mu}) \right|=O_P(N^{1/2}),
\end{align*}
and therefore
\begin{align*}
\max_{1\leq k\leq N}\left|  \sum_{\ell=1}^kK(\ell/h)\frac{1}{N}r_{k,\ell,5} \right|=O_P(h/N^{1/2}).
\end{align*}
Similarly,
\begin{align*}
\max_{1\leq k\leq N}\left|  \sum_{\ell=1}^kK(\ell/h)\frac{1}{N}r_{k,\ell,6} \right|=O_P(h/N^{1/2}).
\end{align*}
Combining Theorem \ref{basic} and Assumption \ref{mu-con} we conclude
\begin{align*}
\max_{1\leq k\leq N}\left|  \sum_{\ell=1}^kK(\ell/h)\frac{1}{N}r_{k,\ell,7} \right|&\leq \frac{1}{N^3}\left(\sum_{i=1}^Nu_i\right)^2\max_{1\leq k \leq N} \left| \sum_{\ell=1}^k K(\ell/h)\sum_{j=1}^{k-\ell}(\mu_i-\bar{\mu})\right|\\
&=O_P(h/N)
\end{align*}
and
\begin{align*}
\max_{1\leq k\leq N}\left|  \sum_{\ell=1}^kK(\ell/h)\frac{1}{N}r_{k,\ell,8} \right|=O_P(h/N)
\end{align*}
We note that by Assumptions \ref{mu-con} and \ref{ass-k}
\begin{align*}
\max_{1\leq k\leq N}\left|\sum_{\ell=1}^{k-1}K(\ell/h)\frac{1}{N}\sum_{i=1}^{k-\ell}(\mu_i-\bar{\mu})(\mu_{i+\ell}-\bar{\mu})\right|=O_(h),
\end{align*}
and therefore by the proof of Lemma \ref{lolemnull} we get
\begin{align}\label{fi-3}
\int_0^1\left(  \sum_{\ell=1}^{\lf Nu\rf-1}K(\ell/h)\hat{\gamma}_{N,\lf Nu\rf,\ell} \right)^2du=O_P(h^2).
\end{align}
Similarly,
\begin{align}\label{fi-4}
\int_0^1\Biggl(  \sum^{0}_{\ell=-(\lf Nu\rf-1)}K(\ell/h)\hat{\gamma}_{N,\lf Nu\rf,\ell} \Biggl)^2=O_P(h^2),
\end{align}
and now the first part of Lemma \ref{lolemalt} follows from \eqref{fi-3} and \eqref{fi-4}.\\
Using Assumption \ref{as-sm} we conclude
\begin{align*}
\max_{1\leq k\leq N}\left|\sum_{\ell=1}^{k-1}K(\ell/h)\frac{1}{N}\sum_{i=1}^{k-\ell}(\mu_i-\bar{\mu})(\mu_{i+\ell}-\mu_i)\right|=o(h),
\end{align*}
and by Assumption \ref{mu-con}
$$
\max_{1\leq k\leq N}\left|\sum_{\ell=1}^{k-1}K(\ell/h)\frac{1}{N}\sum^k_{i=k-\ell+1}(\mu_i-\bar{\mu})^2\right|=o(1).
$$
It follows from Assumption \ref{ass-k} that for all $0<u<1$,
$$
\frac{1}{h}\sum_{\ell=1}^{\lf Nu\rf -1}K(\ell/h)\to\int_0^cK(u)du.
$$
Thus we conclude
\begin{align*}
\int_0^1\left(\frac{1}{h}\sum_{\ell=1}^{\lf Nu\rf -1}K(\ell/h)\frac{1}{N}\sum_{i=1}^{\lf Nu\rf-\ell}(\mu_i-\bar{\mu})(\mu_{i+\ell}-\bar{\mu})-g(u)\int_0^cK(v)dv\right)^2du=o(1),
\end{align*}
and therefore we can replace \ref{fi-3} and \ref{fi-4} with the more precise
\begin{align*}
\int_0^1\left( \frac{1}{h} \sum_{\ell=1}^{\lf Nu\rf-1}K(\ell/h)\hat{\gamma}_{N,\lf Nu\rf,\ell} -g(u)\int_0^c K(v)dv\right)^2du=o_P(1)
\end{align*}
and
\begin{align*}
\int_0^1\Biggl( \frac{1}{h} \sum^{0}_{\ell=-(\lf Nu\rf-1)}K(\ell/h)\hat{\gamma}_{N,\lf Nu\rf,\ell} -
g(u)\int^0_{-c}K(v)dv\Biggl)^2du=o_P(1).
\end{align*}
This completes the proof of Lemma \ref{lolemalt}.
\end{proof}
 \medskip

\noindent
{\it Proof of Theorem \ref{second-th}.} The result in \eqref{lo-1} follows from Lemma \ref{lolemnull} while \ref{lo-1/2} and \ref{lo-2} are immediate consequences of Lemma \ref{lolemalt}.  \qed
 \medskip

\end{document}